\DeclareFontFamily{U}{MnSymbolC}{} \DeclareSymbolFont{MnSyC}{U}{MnSymbolC}{m}{n} \DeclareFontShape{U}{MnSymbolC}{m}{n}{ <-6> MnSymbolC5 <6-7> MnSymbolC6 <7-8> MnSymbolC7
<8-9> MnSymbolC8 <9-10> MnSymbolC9 <10-12> MnSymbolC10 <12-> MnSymbolC12}{} \DeclareMathSymbol{\intprod}{\mathbin}{MnSyC}{'270}
\newtheorem{thm}{Theorem}
\newtheorem{propo}{Proposition}
\newtheorem{defi}{Definition}
\newtheorem{lem}[thm]{Lemma}
\newdefinition{rmk}{Remark}
\newproof{pf}{Proof}
\newproof{pot}{Proof of Theorem \ref{thm2}}
\newtheorem{example}{Example}[section]
\def\l{{\lambda}}
\def\A{{\mathbf{A}}}
\def\campoA{{\mathbf{A}}}
\def\Yu{{\mathbf{X}_1}}
\def\Yd{{\mathbf{X}_2}}
\def\Zu{{\mathbf{Y}_1}}
\def\Zd{{\mathbf{Y}_2}}
\def\Xu{{\mathbf{X}_1}}
\def\Xd{{\mathbf{X_2}}}
\begin{document}

\begin{frontmatter}

%% Title, authors and addresses

%% use the tnoteref command within \title for footnotes;
%% use the tnotetext command for the associated footnote;
%% use the fnref command within \author or \address for footnotes;
%% use the fntext command for the associated footnote;
%% use the corref command within \author for corresponding author footnotes;
%% use the cortext command for the associated footnote;
%% use the ead command for the email address,
%% and the form \ead[url] for the home page:
%%
%% \title{Title\tnoteref{label1}}
%% \tnotetext[label1]{}
%% \author{Name\corref{cor1}\fnref{label2}}
%% \ead{email address}
%% \ead[url]{home page}
%% \fntext[label2]{}
%% \cortext[cor1]{}
%% \address{Address\fnref{label3}}
%% \fntext[label3]{}

% %mis macros

%\title{Solvable structures and $\lambda-$sy\-mme\-tries for second-order ordinary differential equations}
\title{$\lambda-$Sy\-mme\-tries and integrability by quadratures}

%% use optional labels to link authors explicitly to addresses:
%% \author[label1,label2]{<author name>}
%% \address[label1]{<address>}
%% \address[label2]{<address>}

\author[conchi]{C. Muriel}
\ead[conchi]{concepcion.muriel@uca.es}
\author[conchi]{J. L. Romero}
\ead[conchi]{juanluis.romero@uca.es}
\author[conchi]{A. Ruíz}
\ead[conchi]{adriruse@gmail.com}
\address[conchi]{Departamento de Matem\'aticas, Universidad de C\'adiz,
11510 Puerto Real, Spain.}

\begin{abstract}
%% Text of abstract
It is investigated how two  (standard or generalized) $\lambda-$sy\-mme\-tries of a given
second-order ordinary differential equation  can be used to solve the equation by quadratures. The method is based on the construction of two commuting generalized symmetries for this equation by using both $\lambda-$sy\-mme\-tries.   The functions used in that construction  are related with integrating factors of the reduced and
auxiliary equations associated to the $\lambda-$sy\-mme\-tries. These functions can also be used to derive a Jacobi
last multiplier and two integrating factors for the given equation.

Some examples illustrate the  method; one of them is included in  the XXVII case of the Painlev\'e-Gambier classification. An explicit expression of its general solution in terms of two fundamental sets of solutions for two related second-order linear equations is also obtained.
\end{abstract}
\begin{keyword}
%% keywords here, in the form: keyword \sep keyword
$\lambda-$sy\-mme\-tries \sep first integrals \sep integrating factors \sep Jacobi last multiplier.
%% MSC codes here, in the form: \MSC code \sep code
%% or \MSC[2008] code \sep code (2000 is the default)
\MSC 34A05 \sep 34C14 \sep 34C20 \sep 34G20
\end{keyword}

\end{frontmatter}

%%
%% Start line numbering here if you want
%%
% \linenumbers

%% main text

\section{Introduction}\label{introduction}

%For first-order ordinary differential equations (ODEs) there is a correspondence between Lie point symmetries and integrating factors that allows the equation be solved by a quadrature (\cite{olver95},\cite{ibragimovlibro}).
A remarkable application of the Lie group theory to differential equations is that the general solution of an $n$th-order ordinary differential equation (ODE) that admits an $n$th-dimensional solvable symmetry algebra can be found by
means of $n$ successive quadratures \cite{olver1993applications,ibragimovlibro,stephani}. However, the existence of non-trivial point symmetries is  not a necessary condition for the
integrability by quadratures of ODEs. An example of a family of second-order ODEs integrable by quadratures whose point symmetry group is trivial was firstly provided in
\cite{artemioecu}. Such example, and many others that appeared later in the literature \cite{artemiosis, govinder97, nucci2008}, motivated the need of
developing a more general context than the framework of Lie point symmetries to explain the integrability by quadratures of a given ODE.

The integrability of many of these equations can be explained by using  $\lambda-$sy\-mme\-tries (also called $\mathcal{C}^{\infty}-$symmetries
\cite{muriel01ima1, muriel03lie}). This concept was introduced in \cite{muriel01ima1} and it is based on a prolongation formula that generalizes the usual prolongation of vector fields. $\mathcal{C}^{\infty}-$symmetries can be used to reduce the order of ODEs as Lie point symmetries do and 
 have been widely studied and generalized from very different points of view (see \cite{gaetatwisted,1751-8121-42-3-035210,doi:10.1142/S1402925109000303,0305-4470-37-27-007} and the references therein). 
 
 However, it seems that  no studies have been made on the consequences of having two or more $\mathcal{C}^\infty-$symmetries for a given ODE. In fact, the reduction process associated to one of the
$\lambda-$symmetries is independent of the corresponding one for any of the remaining $\mathcal{C}^\infty-$symmetries.
%Another related question is to know if a sufficient number of
%$\lambda-$symmetries leads directly to the complete integration of the ODE.

The integrability of $2$nd-order ODEs admitting two $\mathcal{C}^{\infty}-$symmetries is studied in this paper. Section \ref{s1} includes a review of the basic notions (limited to second-order ODEs) and the extension of the initial notion of $\mathcal{C}^{\infty}-$symmetry introduced in \cite{muriel01ima1} to consider $\lambda-$prolongations of generalized vector fields.
In Sections \ref{s3} and \ref{s4}  a systematic procedure to construct two commuting generalized symmetries from two given (standard or generalized) $\mathcal{C}^{\infty}-$sy\-mme\-tries of a second-order ODE is provided. According to \cite{muriel2009}, two independent first integrals of the ODE can be found, not necessarily by quadrature, by means of a
procedure that uses each $\mathcal{C}^{\infty}-$sy\-mme\-try separately. In Section \ref{s5}  the two $\mathcal{C}^{\infty}-$sym\-me\-tries   and the
functions that appear in the construction of the generalized symmetries are used simultaneously to find such first integrals by qua\-dra\-ture.
% (see equations (\ref{I1})-(\ref{I2})).

Several significant objects  in the analytical study of the ODEs arise as an immediate consequence  of the described procedure. For instance, 
 a well-known result on the relationships between Jacobi last multipliers
\cite{whittaker1988treatise, nucci2005jacobi, MR2606129, NucciLeachCGroup1, NucciLeachCGroup2} and Lie point symmetries is that the knowledge of two
independent symmetries provides an explicit formula for a Jacobi last multiplier. In Section \ref{secJacobi}, a new explicit formula for a Jacobi last multiplier of $2$nd-order ODEs that
involve two $\mathcal{C}^{\infty}-$sy\-mme\-tries admitted by the equation 
%(see equations (\ref{JLM})) 
is   provided.

In Section \ref{secJacobi} the known  $\lambda-$symmetries and the functions that appear in the construction of the generalized symmetries are used to obtain two integrating factors for the second-order ODE. The cited functions are closely related to  integrating factors of the reduced and auxiliary
equations that appear in the reduction processes associated to the given generalized $\mathcal{C}^{\infty}-$sy\-mme\-tries.
 %(see Theorems \ref{teofiredu} and \ref{teofiaux}).

The results in this paper are applied to a subclass of equations in the XXVII case of the Painlev\'e-Gambier classification \cite{ince}. In the general case the equations in this subclass do
not admit Lie point symmetries, but any equation of the family admits two non-equivalent $\mathcal{C}^{\infty}-$sy\-mme\-tries, which have been recently found by the
authors of \cite{guha2013lambda}.  Such $\mathcal{C}^{\infty}-$sy\-mme\-tries are used to illustrate the procedure of integration of the equations by quadratures. The study of that family of equations has been carried out through several examples to illustrate the different steps that appear in the method.  As a consequence, the general solution of any of the equations in the family can be expressed in terms of two fundamental sets of solutions for two second-order linear equations. 
Explicit expressions for a Jacobi last multiplier and the integrating factors of the reduced and auxiliary equations are
also provided.

In Section \ref{sect7} a step-by-step description of the procedure to facilitate its practical application is presented. Several examples show how the method can be used to solve equations lacking Lie point symmetries or which admit just one Lie point symmetry.

\section{Preliminaries}\label{s1}

Throughout this paper $M$ will denote an open subset of the space of the independent and dependent variables $(x,u)$ of a given $2$nd-order ODE:
\begin{equation}
\label{orden2} u_{xx}=\phi(x,u,u_x),
\end{equation} where the subscript denotes derivation with respect to $x.$

The vector field on the jet space $M^{(1)}$ associated to equation (\ref{orden2}) is defined by  $\mathbf{A}=\partial_x+u_x\partial_u+\phi(x,u,u_x)\partial_{u_{x}}$. 
  The total derivative operator with respect to $x$ is  defined by
$$\mathbf{D}_x=\partial_x+u_x\partial_u+u_{xx}\partial_{u_{x}}+\cdots.$$

Recall \cite{muriel01ima1} that if  $\mathbf{v}=\xi(x,u)\partial_x+\eta(x,u)\partial_u$ is a vector field on $M$ and $\lambda=\lambda(x,u,u_x)$ is a smooth function defined on $M^{(1)}$ then the 
first-order $\lambda-$pro\-lon\-ga\-tion of $\mathbf{v}$ is the vector field $\mathbf{v}^{[\lambda,(1)]}$ on $M^{(1)}$ defined by 
\begin{equation}
\label{glambdaprol} 
\mathbf{v}^{[\lambda,(1)]}=\mathbf{v}+\bigl((\mathbf{D}_x+\lambda)(\eta)-(\mathbf{D}_x+\lambda)(\xi)u_x \bigr)\partial_{u_{x}}.
\end{equation}
The functions $\xi$ and $\eta$ will be called the infinitesimals of $\mathbf{v}$.
Observe that for $\lambda=0,$ (\ref{glambdaprol}) is the standard first-order prolongation $\mathbf{v}^{(1)}$ of $\mathbf{v}$ \cite{stephani}.

The 
pair 
$(\mathbf{v},\lambda)$ is a $\mathcal{C}^{\infty}-$sy\-mme\-try (also called a $\lambda-$symmetry) of equation (\ref{orden2})
if
\begin{equation}\label{corcheteA}
[\mathbf{v}^{[\lambda,(1)]},\mathbf{A}]=\lambda\mathbf{v}^{[\lambda,(1)]}+\mu \mathbf{A},
\end{equation}
 where $\mu=-(\mathbf{A}+\lambda)(\xi)$. 
 For $\lambda=0$, if $\mathbf{v}$ satisfies the condition (\ref{corcheteA}) then $\mathbf{v}$ is a standard Lie point symmetry of (\ref{orden2}) \cite{stephani}.

%  and $\mathbf{v}^{[\lambda,(1)]}$ is the first order $\lambda-$prolongation of $\mathbf{v}:$ \begin{equation}
%\label{glambdaprol} \mathbf{v}^{[\lambda,(1)]}=\mathbf{v}+((D_x+\lambda)(\eta)-(D_x+\lambda)(\xi)u_x )\partial_{u_{x}}.
%\end{equation}
%We observe that if $\lambda=0,$ (\ref{glambdaprol}) is the standard first order prolongation $\mathbf{v}^{(1)}$ and  (\ref{corcheteA}) is the characterization of Lie point symmetries of the equation (\cite{stephani}***).
%

When a $\mathcal{C}^{\infty}-$sy\-mme\-try $(\mathbf{v},\lambda)$ is known, a method to solve the given ODE proceeds as follows:  %
if $y=y(x,u), w=w(x,u,u_x)$ are two invariants of $\mathbf{v}^{[\lambda,(1)]},$ then (\ref{orden2}) can be written as a reduced equation $\Delta(y,w,w_y)=0.$ Then the general solution of (\ref{orden2}) arises from the general solution $w={H}(y,C)$ of the
reduced equation by solving the corresponding auxiliary equation $w(x,u,u_x)=H\bigl(y(x,u),C\bigr),$ where $C\in \mathbb{R}$ (see Theorem 3.2 in \cite{muriel01ima1} for
details). This method generalizes the classical Lie method and has been successfully applied to integrate or reduce the order of many ODEs lacking Lie point symmetries \cite{muriel01ima1, muriel03lie}.

If  $\xi=\xi(x,u,u_x)$ and  $\eta=\eta(x,u,u_x)$ are two smooth functions defined on $ M^{(1)}$ then 
$$\mathbf{v}=\xi(x,u,u_x)\partial_x+\eta(x,u,u_x)\partial_u$$
is a well-defined vector field on $ M^{(1)}$. If $\lambda=\lambda(x,u,u_x)$ is also a smooth function on $ M^{(1)}$ then by considering the vector field $\mathbf{A}$, the vector field $\mathbf{v}$ can be prolonged by using a formula  similar to (\ref{glambdaprol}) but changing $\mathbf{D}_x$ by $\mathbf{A}$: i.e. 
\begin{equation}
\label{glambdaprol1} 
\mathbf{v}^{[\lambda,(1)]}=\mathbf{v}+\bigl((\mathbf{A}+\lambda)(\eta)-(\mathbf{A}+\lambda)(\xi)u_x \bigr)\partial_{u_{x}},
\end{equation}
which is a well-defined vector field on  $M^{(1)}$.  The pair  $(\mathbf{v},\lambda)$ 
 will be called a \textit{generalized} $\mathcal{C}^{\infty}-$sym\-me\-try (or a generalized $\lambda-$symmetry) of equation (\ref{orden2}) if 
 (\ref{corcheteA}) holds, where $\mathbf{v}^{[\lambda,(1)]}$ is defined by (\ref{glambdaprol1}).  It is clear that if the pair $(\mathbf{v},\lambda)$ is a generalized $\mathcal{C}^\infty-$symmetry of equation (\ref{orden2}) for the function $\lambda=0$ then the vector field
 $\mathbf{v}^{[0,(1)]}=\mathbf{v}^{(1)}$ is a generalized symmetry of equation  (1) [3].
% (by replacing $D_x$  by $\campoA$ in (\ref{glambdaprol})) will be called a \textit{generalized} $\mathcal{C}^{\infty}-$symmetry (or a generalized $\lambda-$symmetry) of equation (\ref{orden2}).

%%If there is

According to (\ref{corcheteA}), any given generalized $\mathcal{C}^{\infty}-$sy\-mme\-try $(\mathbf{v},\lambda)$ of (\ref{orden2}) defines a vector field
$\mathbf{v}^{[\lambda,(1)]}$ such that the system $\left\{\mathbf{A},\mathbf{v}^{[\lambda,(1)]}\right\}$ is in involution. By Fröbenius Theorem \cite{olver1993applications}, the system
$\left\{\mathbf{A},\mathbf{v}^{[\lambda,(1)]}\right\}$ is integrable and the integral submanifold is locally defined by a  first integral,
$I\in \mathcal{C}^{\infty}(M^{(1)}),$ such that 

%\begin{equation} \label{ficomunes} 
$$\mathbf{A}(I)=\mathbf{v}^{[\lambda,(1)]}(I)=0.$$
%\end{equation} 
In this case, $I$ will be called a first integral of $\mathbf{A}$ associated to
$(\mathbf{v},\lambda).$

%A method to find the first integrals associated to as follows: if $I=I(y,w)$ is a first integral of the reduced equation associated to $(\mathbf{v},\lambda),$ then $I=I(y(x,u),w(x,u,u_x))$ becomes a first integral of $\mathbf{A}$ associated  to
%$(\mathbf{v},\lambda).$

A given first integral of $\campoA$ can be associated to several
generalized $\mathcal{C}^{\infty}-$sy\-mme\-tries. This fact is related to the notion of equivalence of $\mathcal{C}^{\infty}-$sy\-mme\-tries established in
\cite{muriel2009,muriel09wascom}. Next  this concept is extended for generalized $\mathcal{C}^{\infty}-$sy\-mme\-tries:

\begin{defi}\label{defeq}{\rm
 Two generalized $\mathcal{C}^{\infty}-$sy\-mme\-tries $(\mathbf{v}_1,\lambda_1)$ and $(\mathbf{v}_2,\lambda_2)$ of the equation (\ref{orden2}) will be called 
$\mathbf{A}-$equivalent if the set of vector fields  $\left\{\campoA,{\mathbf{v}_{1}^{[\lambda_1,(1)]}},{\mathbf{v}_{2}^{[\lambda_2,(1)]}}\right\}$ is dependent 
over $\mathcal{C}^{\infty}(M^{(1)}).$ In this case the notation  $(\mathbf{v}_1,\lambda_1) \stackrel{\campoA}{\sim}(\mathbf{v}_2,\lambda_2)$  will be used.}
\end{defi}

It follows from Definition \ref{defeq} that the first integrals of $\campoA$ associated to equi\-va\-lent generalized $\mathcal{C}^{\infty}-$sy\-mme\-tries are functionally dependent. Consequently,  a first integral of $\campoA$ associated to a generalized $\mathcal{C}^{\infty}-$sy\-mme\-try $(\mathbf{v},\lambda)$ can be found by using any element of its class of equivalence. A particularly simple element
in the equivalence class of a given generalized $\mathcal{C}^{\infty}-$sy\-mme\-try  $(\mathbf{v},\lambda)$ is the
 pair $\bigl(\partial_u,\lambda+\mathbf{A}(Q)/Q\bigr),$ where $Q=\eta-\xi u_x$ denotes the characteristic of $\mathbf{v}=\xi\partial_x+\eta\partial_u.$ This pair is called the
\textit{canonical} representative of the equivalence class  of $(\mathbf{v},\lambda)$ \cite{muriel2009}.

Since (\ref{orden2}) is a $2$nd-order equation, its general solution can be obtained by considering two first integrals $I_1$ and $I_2$ of $\mathbf{A}$ which are respectively associated to two known non-equivalent generalized  $\mathcal{C}^{\infty}-$sy\-mme\-tries $(\bar{\mathbf{v}}_1,\bar{\lambda}_1)$ and $(\bar{\mathbf{v}}_2,\bar{\lambda}_2)$ of (\ref{orden2}).
%associated to any pair of non-equivalent generalized $\mathcal{C}^{\infty}-$symmetries.
In the following sections  a procedure to compute such first integrals by quadratures will be described.

\section{Commuting generalized $\mathcal{C}^{\infty}-$sy\-mme\-tries}\label{s3}

Assume that equation (\ref{orden2}) admits two non-equivalent generalized  $\mathcal{C}^{\infty}-$sy\-mme\-tries $(\bar{\mathbf{v}}_1,\bar{\lambda}_1)$ and $(\bar{\mathbf{v}}_2,\bar{\lambda}_2)$ and   let $\mathcal{A}_1$ and $\mathcal{A}_2$ be  their respective  $\mathbf{A}$-equi\-va\-len\-ce classes. By using their canonical representatives it can be considered, without loss of generality,  that such  $\mathcal{C}^{\infty}-$symmetries are of the form  $(\partial_u,\lambda_1)\in \mathcal{A}_1$ and $(\partial_u,\lambda_2)\in \mathcal{A}_2$, where $\lambda_1\neq \lambda_2$ because   $\mathcal{A}_1\neq \mathcal{A}_2$.   In this case the functions $\lambda_1$ and $\lambda_2$ are particular solutions
of the determining equation (see equation (7) in \cite{muriel2009}):
\begin{equation}
\label{eqdet2} \lambda_x+\lambda_u u_x+\lambda_{u_x} \phi+\lambda^{2}=\phi_{u}+\lambda\phi_{u_x}.
\end{equation}
   In order to simplify the notation, for $i=1,2$,  $(\partial_u)^{[\lambda_{i},(1)]}$ will be denoted by $\mathbf{X}_i$:

\begin{equation}\label{Ys}
\mathbf{X}_i=(\partial_u)^{[\lambda_{i},(1)]}=\partial_u+\lambda_{i}\partial_{u_x}, \quad (i=1,2). \end{equation}
As a direct consequence of former definitions, it can be checked that 
\begin{equation}\label{corchetesY}
[\mathbf{X}_1,\mathbf{A}]=\lambda_1\mathbf{X}_1, \quad [\mathbf{X}_2,\mathbf{A}]=\lambda_2\mathbf{X}_2, \quad
[\mathbf{X}_1,\mathbf{X}_2]=\rho(\mathbf{X}_1-\mathbf{X}_2),
\end{equation}
where the function $\rho=\rho(x,u,u_x)$ is given by
\begin{equation}\label{ro} 
\rho=\frac{\mathbf{X}_1(\lambda_2)-\mathbf{X}_2(\lambda_1)}{\lambda_1-\lambda_2}.
\end{equation} 

Since it is assumed that $\lambda_1\neq \lambda_2$, then $\mathbf{X}_1-\mathbf{X}_2\neq0$ and therefore the vector fields 
$\mathbf{X}_1$ and $\mathbf{X}_2$ commute if and only if $\rho=0$. In this case, by applying the Jacobi identity
to the vectors fields $\bigl\{\mathbf{A},\mathbf{X}_1,\mathbf{X}_2\bigr\}$ and by using  (\ref{corchetesY}),
\begin{equation}
\label{jacobi}
\begin{array}{lll}
0&=&\bigl[\Yu,[\Yd,\A]\bigr]+\bigl[\Yd,[\A,\Yu]\bigr]+\bigl[\A,[\Yu,\Yd]\bigr]\\
& =& [\Yu,{\l_2}\,\Yd] -[\Yd,{\l_1}\,\Yu]+[\A,0]\\
&=& \Yu({\lambda_2})\cdot\Yd-\Yd({\lambda_1})\cdot \Yu.
\end{array}
\end{equation} Since the vector fields $\Yu,\Yd$ are not proportional, because $\lambda_1\neq \lambda_2$, (\ref{jacobi}) implies that
\begin{equation}
\label{lambdasinvariantes} \Yu({\lambda_2})=\Yd({\lambda_1})=0.
\end{equation}
In other words, the vector fields $\mathbf{X_1}$ and $\mathbf{X}_2$ commute if and only if (\ref{lambdasinvariantes}) holds.

%
%In the next result we establish relations between the Lie brackets of the vectors fields $\{\mathbf{A},\mathbf{X_1},\mathbf{X_2}\}$ and a criterion to characterize the case
%when the vector fields $\Yu$ and $\Yd$ commute:
%
%
%\begin{thm}\label{teo1} {\rm  Let   $(\partial_u,\lambda_1)$ and $(\partial_u,\lambda_2)$ be two $\mathcal{C}^{\infty}-$symmetries of (\ref{orden2}) and  denote $\mathbf{X_i}=(\partial_u)^{[\lambda_i,(1)]},$ for $i=1,2.$ Then:
%\begin{equation}\label{corchetesY}
%[\mathbf{X_1},\mathbf{A}]=\lambda_1\mathbf{X_1}, \quad [\mathbf{X_2},\mathbf{A}]=\lambda_2\mathbf{X_2}, \quad
%[\mathbf{X_1},\mathbf{X_2}]=\rho(\mathbf{X_1}-\mathbf{X_2}),
%\end{equation} wTherefore if the vector fields $\Yu$ and $\Yd$ commute then $\Yu(\lambda_2)=\Yd(\lambda_1).$ In this case necessarily $\Yu(\lambda_2)=\Yd(\lambda_1)=0.$ }
%\end{thm}
%\begin{proof}
%The relations (\ref{corchetesY}) can be checked by a straightforward calculation. Let us assume that $\mathbf{X_1}$ and $\mathbf{X_2}$ commute. We apply the Jacobi identity
%to the vectors fields $\{\mathbf{A},\mathbf{X_1},\mathbf{X_2}\}$ and use the relations (\ref{corchetesY}) to get:
%\begin{equation}
%\label{jacobi}
%\begin{array}{lll}
%0&=&[\Yu,[\Yd,\A]]+[\Yd,[\A,\Yu]]+[\A,[\Yu,\Yd]]\\
%& =& [\Yu,{\l_2}\,\Yd] -[\Yd,{\l_1}\,\Yu]+[\A,0]\\
%&=& \Yu({\lambda_2})\cdot\Yd-\Yd({\lambda_1})\cdot \Yu.
%\end{array}
%\end{equation} Since the vector fields $\Yu,\Yd$ are not proportional, (\ref{jacobi}) implies that
%\begin{equation}
%\label{lambdasinvariantes} \Yu({\lambda_2})=\Yd({\lambda_1})=0.
%\end{equation}
%\end{proof}
The following lemma will be used to prove the existence of two $\mathcal{C}^{\infty}-$sy\-mme\-tries  which are $\A-$equivalent to $(\partial_u,\lambda_1)$ and
$(\partial_u,\lambda_2)$  respectively and are such that the corresponding first-order $\lambda-$pro\-lon\-ga\-tions commute:

\begin{lem}\label{lema1} {\rm With the above notations, if $f_1=f_1(x,u,u_x)$ and $f_2=f_2(x,u,u_x)$ are two functions such that \begin{equation}\label{f1f2}
\frac{\mathbf{X}_1(f_2)}{f_2}= \frac{\mathbf{X}_2(f_1)}{f_1}=\rho,
\end{equation} where $\rho$ is given by (\ref{ro}), then 
\begin{equation}
\label{yes}
[f_1\mathbf{X}_1,\mathbf{A}]=\rho_1(f_1\mathbf{X}_1),\qquad [f_2\mathbf{X}_2,\mathbf{A}]=\rho_2(f_2\mathbf{X}_2),\qquad [f_1\mathbf{X}_1,f_2\mathbf{X}_2]=0,
\end{equation}
where
\begin{equation} \label{lambdasnew}\rho_i=\lambda_i-\frac{\mathbf{A}(f_i)}{f_i}, \quad\mbox{
for } i=1,2.\end{equation}
}
\end{lem}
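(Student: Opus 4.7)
The plan is to reduce everything to the standard Leibniz-type identity for the Lie bracket of a function multiple of a vector field, namely $[fX,Y] = f[X,Y] - Y(f)X$, together with the commutator relations (\ref{corchetesY}) already established in the excerpt. The key observation is that the hypotheses (\ref{f1f2}) on $f_1,f_2$ are precisely what is needed to cancel the $\rho(\mathbf{X}_1-\mathbf{X}_2)$ term in $[\mathbf{X}_1,\mathbf{X}_2]$ after rescaling.

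For the first two identities in (\ref{yes}), I would expand $[f_1\mathbf{X}_1,\mathbf{A}]$ using the Leibniz rule:
\begin{equation*}
[f_1\mathbf{X}_1,\mathbf{A}] = f_1[\mathbf{X}_1,\mathbf{A}] - \mathbf{A}(f_1)\,\mathbf{X}_1.
\end{equation*}
By (\ref{corchetesY}) the first term is $f_1\lambda_1\mathbf{X}_1$, and factoring out $f_1$ from both terms yields $(\lambda_1-\mathbf{A}(f_1)/f_1)(f_1\mathbf{X}_1)=\rho_1(f_1\mathbf{X}_1)$, which is exactly the claim. The computation for $[f_2\mathbf{X}_2,\mathbf{A}]$ is strictly parallel.

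For the commutation identity $[f_1\mathbf{X}_1,f_2\mathbf{X}_2]=0$, I would apply the Leibniz rule in both arguments to obtain
\begin{equation*}
[f_1\mathbf{X}_1,f_2\mathbf{X}_2] = f_1\mathbf{X}_1(f_2)\,\mathbf{X}_2 - f_2\mathbf{X}_2(f_1)\,\mathbf{X}_1 + f_1f_2\,[\mathbf{X}_1,\mathbf{X}_2].
\end{equation*}
Substituting $\mathbf{X}_1(f_2)=\rho f_2$ and $\mathbf{X}_2(f_1)=\rho f_1$ from (\ref{f1f2}), and replacing $[\mathbf{X}_1,\mathbf{X}_2]$ by $\rho(\mathbf{X}_1-\mathbf{X}_2)$ via (\ref{corchetesY}), the three resulting terms $f_1f_2\rho\mathbf{X}_2$, $-f_1f_2\rho\mathbf{X}_1$, and $f_1f_2\rho(\mathbf{X}_1-\mathbf{X}_2)$ cancel identically.

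There is no real obstacle here; the entire lemma is a direct verification once the Leibniz rule for $[fX,Y]$ is invoked. The only point requiring minimal care is bookkeeping the signs and ensuring that the prescribed logarithmic derivatives in (\ref{f1f2}) are exactly what is needed to annihilate the non-commuting part of $[\mathbf{X}_1,\mathbf{X}_2]$; the symmetry between the roles of $f_1$ and $f_2$ in the hypothesis makes this automatic.
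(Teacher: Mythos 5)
Your proof is correct and follows essentially the same route as the paper: both rely on the Leibniz identity $[fX,gY]=fg[X,Y]+fX(g)Y-gY(f)X$ together with the relations (\ref{corchetesY}), with the hypothesis (\ref{f1f2}) cancelling the $\rho(\mathbf{X}_1-\mathbf{X}_2)$ term. The only difference is cosmetic: the paper works out the commutation identity in detail and dismisses the first two relations as ``similar,'' whereas you spell out the first two and compress the third.
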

\begin{proof}
By the properties of the Lie bracket, for any functions $f_1$ and $f_2$ the relations $$\begin{array}{lll}
[f_1\Yu,f_2\Yd]&=&f_1\,f_2\,[\Yu,\Yd]+f_1\Yu(f_2)\cdot\Yd-f_2\Yd(f_1)\cdot \Yu\\ &=&f_1\,f_2\,\rho(\Yu-\Yd)+f_1\Yu(f_2)\cdot\Yd-f_2\Yd(f_1)\cdot \Yu\\
&=&f_2\left(\rho-\frac{\Yd(f_1)}{f_1}\right)\cdot f_1\Yu-f_1\left(\rho-\frac{\Yu(f_2)}{f_2}\right)\cdot f_2\Yd
\end{array}$$ hold. If $f_1,f_2$ satisfy (\ref{f1f2}), then $[f_1\mathbf{X}_1,f_2\mathbf{X}_2]=0.$

The first two relations in (\ref{yes}) can be proved similarly.
\end{proof}

In order to simplify the notations, if $f_1$ and $f_2$ satisfy  (\ref{f1f2}) then  the vector fields $f_1\mathbf{X}_1$ and  $f_2\mathbf{X}_2$ will be denoted by $\mathbf{Y}_1$ and $\mathbf{Y}_2$ respectively:
 $\mathbf{Y}_i= f_i\mathbf{X}_i,$ for $i=1,2.$  
 With these notations, equations in (\ref{yes}) can be written as 
\begin{equation}\label{corchetesnuevos}
[\mathbf{Y}_1,\mathbf{A}]=\rho_1\mathbf{Y}_1,\quad [\mathbf{Y}_2,\mathbf{A}]=\rho_2\mathbf{Y}_2,\quad [\mathbf{Y}_1,\mathbf{Y}_2]=0,
\end{equation}
where  $\rho_1$ and $\rho_2$ are given by (\ref{lambdasnew}).

By (\ref{corcheteA}), the first two  relations in (\ref{corchetesnuevos}) show that \begin{equation}
\label{Y} \mathbf{Y}_i=f_i\mathbf{X}_i=(f_i\partial_u)^{[\rho_i,(1)]} \quad \mbox{for} \quad i=1,2.
\end{equation} 

According to Definition \ref{defeq}, the pairs $(f_1\partial_u,\rho_1)$ and  $(f_2\partial_u,\rho_2)$ are two generalized $\mathcal{C}^{\infty}-$symmetries of (\ref{orden2}) which are  $\mathbf{A}-$equi\-valent to $(\partial_u,\lambda_1)$ and $(\partial_u,\lambda_2)$ respectively.

%
%\begin{equation}
%\left(\partial_u,\lambda_i\right) \stackrel{\campoA}{\sim}\left(f_i\partial_u,\rho_i\right), \qquad
%\rho_i=\lambda_i-\frac{\mathbf{A}(f_i)}{f_i}.
%\end{equation}

Previous discussion shows that if $(\partial_u,\lambda_1)\in \mathcal{A}_1$ and $(\partial_u,\lambda_2)\in \mathcal{A}_2$ are two non-equivalent   $\mathcal{C}^{\infty}-$symmetries of (\ref{orden2}) then $(f_1\partial_u,\rho_1)\in \mathcal{A}_1$ and  $(f_2\partial_u,\rho_2)\in \mathcal{A}_2$   and by (\ref{Y}) their first-order $\lambda-$prolongations commute.

%  of two non-equi\-va\-lent $\mathcal{C}^{\infty}-$symmetries of (\ref{orden2}) there exist two $\mathcal{C}^{\infty}-$symmetries whose respective first order $\lambda-$prolongations commute. A procedure to construct them from the corresponding canonical representatives has been described. These results are collect in the following theorem for  further reference.

For further reference, the next theorem collects the main aspects of former discussion.

\begin{thm}\label{teocommuting}
{\rm Let   $(\partial_u,\lambda_1)$ and $(\partial_u,\lambda_2)$ be the canonical representatives of  two non-equivalent generalized  $\mathcal{C}^{\infty}-$sym\-me\-tries of (\ref{orden2}) and let $\mathcal{A}_1$ and $\mathcal{A}_2$
be their respective equivalence classes. Denote $\mathbf{X}_i=(\partial_u)^{[\lambda_i,(1)]},$ for $i=1,2.$ Let $\rho$ be the function defined by (\ref{ro}) and let $f_1,f_2$ be two functions satisfying (\ref{f1f2}).  Then:
\begin{enumerate}
\item $(f_1\partial_u,\rho_1)\in \mathcal{A}_1$  and $(f_2\partial_u,\rho_2)\in \mathcal{A}_2$, where $\rho_1,\rho_2$ are given by (\ref{lambdasnew}). 
\item By denoting $\mathbf{Y}_1=(f_1\partial_u)^{[\rho_1,(1)]}$ and $\mathbf{Y}_2=(f_2\partial_u)^{[\rho_2,(1)]}$, the relations (\ref{corchetesnuevos}) hold. In particular, $\mathbf{Y}_1,\mathbf{Y}_2$ commute.

\end{enumerate}}
\end{thm}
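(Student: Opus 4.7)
The plan is to deduce the theorem almost directly from Lemma \ref{lema1}, once the key identification in equation (\ref{Y}) is verified. Concretely, the only nontrivial ingredient that the lemma does not already supply is the claim that $f_i\mathbf{X}_i$ is actually the first-order $\lambda$-prolongation (in the generalized sense of (\ref{glambdaprol1})) of the vector field $f_i\partial_u$ with parameter $\rho_i=\lambda_i-\mathbf{A}(f_i)/f_i$. Everything else in the statement follows by translating the bracket relations (\ref{yes}) through this identification.

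First I would verify (\ref{Y}) by a direct computation. Applying formula (\ref{glambdaprol1}) to $\mathbf{v}=f_i\partial_u$ (so $\xi=0$, $\eta=f_i$) with parameter $\rho_i$ yields
\begin{equation*}
(f_i\partial_u)^{[\rho_i,(1)]}=f_i\partial_u+(\mathbf{A}+\rho_i)(f_i)\,\partial_{u_x}
=f_i\partial_u+\bigl(\mathbf{A}(f_i)+\rho_i f_i\bigr)\partial_{u_x}.
\end{equation*}
On the other hand, from (\ref{Ys}), $f_i\mathbf{X}_i=f_i\partial_u+f_i\lambda_i\,\partial_{u_x}$. The two coincide precisely when $\rho_i f_i=f_i\lambda_i-\mathbf{A}(f_i)$, i.e.\ exactly when $\rho_i$ is given by (\ref{lambdasnew}). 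This settles (\ref{Y}).

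Next, I would invoke Lemma \ref{lema1}: since $f_1,f_2$ satisfy (\ref{f1f2}), the three relations (\ref{yes}) hold. Substituting $\mathbf{Y}_i=f_i\mathbf{X}_i$ and using (\ref{Y}) immediately recasts (\ref{yes}) as (\ref{corchetesnuevos}), establishing item (2). In particular the third bracket in (\ref{corchetesnuevos}) gives the asserted commutativity of $\mathbf{Y}_1$ and $\mathbf{Y}_2$.

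For item (1), I would read the first two relations in (\ref{corchetesnuevos}) as the $\mathcal{C}^\infty$-symmetry condition (\ref{corcheteA}) for the pair $(f_i\partial_u,\rho_i)$: its infinitesimal $\xi$ vanishes, so $\mu=-(\mathbf{A}+\rho_i)(0)=0$, and $[\mathbf{Y}_i,\mathbf{A}]=\rho_i\mathbf{Y}_i$ is exactly what (\ref{corcheteA}) demands. Hence $(f_i\partial_u,\rho_i)$ is a generalized $\mathcal{C}^\infty$-symmetry of (\ref{orden2}). Finally, the trivial identity $\mathbf{Y}_i-f_i\mathbf{X}_i=0$ is a nonzero $\mathcal{C}^\infty(M^{(1)})$-linear relation among $\{\mathbf{A},\mathbf{Y}_i,\mathbf{X}_i\}$, so by Definition \ref{defeq} $(f_i\partial_u,\rho_i)\stackrel{\mathbf{A}}{\sim}(\partial_u,\lambda_i)$, placing it in the class $\mathcal{A}_i$.

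The only delicate step is the verification of (\ref{Y}); it is routine but requires being careful that the generalized $\lambda$-prolongation formula uses $\mathbf{A}$ rather than $\mathbf{D}_x$, which is precisely what makes the definition of $\rho_i$ in (\ref{lambdasnew}) come out correctly. Everything else is a direct reading of Lemma \ref{lema1} through this identification.
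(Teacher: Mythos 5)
Your proposal is correct and follows essentially the same route as the paper, which proves this theorem via the preceding discussion: Lemma \ref{lema1} supplies the bracket relations (\ref{yes}), the identification (\ref{Y}) recasts them as (\ref{corchetesnuevos}), and Definition \ref{defeq} gives membership in $\mathcal{A}_1$ and $\mathcal{A}_2$. Your explicit verification of (\ref{Y}) from the prolongation formula (\ref{glambdaprol1}) is a slightly more direct presentation of what the paper infers from (\ref{corcheteA}), but the argument is the same.
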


 The above-described method is illustrated in the next example by  constructing  two  commuting
generalized $\mathcal{C}^{\infty}-$sym\-me\-tries from two known $\mathcal{C}^{\infty}-$sym\-me\-tries of an equation.

\begin{example}\label{eje1}
{\rm Let us consider the family of $2$nd-order equations
\begin{equation}\label{painleve}
u_{xx}-\frac{1}{2\,u}u_{x}^2+ 2\,u u_{x} +\frac{1}{2}u^{3}-F(x)  u+\frac{1}{2\,u}=0,
\end{equation} which is a particular case of the XXVII equation in the Painlevé-Gambier classification \cite{ince}.
%The Maple system, for instance, seems not to be able to compute Lie point symmetries, integrating factors or first integrals for equation (\ref{painleve}) if $F(x)$ is an arbitrary
%function.

For an arbitrary  function $F=F(x)$, equation (\ref{painleve}) does not admit Lie point symmetries. Nevertheless,  it has been shown in \cite{guha2013lambda} that 
two non-equivalent $\mathcal{C}^{\infty}-$sy\-mme\-tries of (\ref{painleve}), $(\partial_u,\lambda_1)$ and
$(\partial_u,\lambda_2),$ are defined by: 
\begin{equation}\label{lambdasymeje}
\lambda_1={\frac {u_x}{u_{{}}}}-u_{{}}+\frac{1}{u},\quad\lambda_2={\frac {u_x}{u_{{}}}}-u_{{}}-\frac{1}{u}.
\end{equation} 
It can be checked that these two functions are particular solutions of the co\-rres\-ponding determining equation (\ref{eqdet2}).
For this case, the vector fields (\ref{Ys}) satisfy:
$$[\Yu,\Yd]=\frac{2}{u}\left(\Yu-\Yd\right).$$
Since $\rho=2/u$  does not depend on $u_{x},$ 
 two functions $f_1$ and $f_2$ satisfying (\ref{f1f2}) can be easily computed:
\begin{equation}\label{f1f2ej}
f_1(x,u,u_{x})=f_2(x,u,u_{x})=u^2.
\end{equation}
The vector fields $\Zu=u^2\Xu$ and $\Zd=u^2\Xd$ become
\begin{equation}\label{Yejemplo}
\begin{array}{l}
\Zu=u^2\partial_u+(u_x u-u^3+u)\partial_{u_x
},\\
\Zd=u^2\partial_u+(u_x u-u^3-u)\partial_{u_x
}\end{array}\end{equation} 
and satisfy $[\Zu,\Zd]=0.$ 
According to (\ref{Y}), it  can be written  $\Zu=(u^2\partial_u)^{[\rho_1,(1)]}$ and $\Zd=(u^2\partial_u)^{[\rho_2,(1)]}$ for the functions
$$\rho_1=\lambda_1-\frac{\campoA(u^2)}{u^2}=-\frac {u_x}{u_{{}}}-u_{{}}+\frac{1}{u} \quad\mbox{and}\quad \rho_2=\lambda_2-\frac{\campoA(u^2)}{u^2}=-\frac {u_x}{u_{{}}}-u_{{}}-\frac{1}{u}.$$ 

 Consequently, 
 two new  $\mathcal{C}^{\infty}-$symmetries    of equation (\ref{painleve}) have been constructed, $(u^2\partial_u,\rho_1)$ and $(u^2\partial_u,\rho_2),$ which are  $\campoA-$equivalent to $(\partial_u,\lambda_1)$ and $(\partial_u,\lambda_2)$ respectively and satisfy  $$[\mathbf{Y}_1,\mathbf{Y}_2]= \left[(u^2\partial_u)^{[\rho_1,(1)]},(u^2\partial_u)^{[\rho_2,(1)]}\right]=0.$$
}
\hfill\qedsymbol \end{example}

An important advantage of the pairs $(f_1\partial_u,\rho_1)\in \mathcal{A}_1$ and $(f_2\partial_u,\rho_2)\in \mathcal{A}_2$ constructed in Theorem \ref{teocommuting} is that
 $\Zu=(u^2\partial_u)^{[\rho_1,(1)]}$ and $\Zd=(u^2\partial_u)^{[\rho_2,(1)]}$ can be simultaneously straightened by using quadratures alone.
The next objective is to  search for two independent functions $w_1=w_1(x,u,u_x)$ and $w_2=w_2(x,u,u_x)$ such that in the local system of coordinates  $\{x,w_1,w_2\}$ of $M^{(1)}$
the vector fields $\mathbf{Y}_1$ and $\mathbf{Y}_2$ can be written as
\begin{equation}
\label{nuevo1}
\mathbf{Y}_1=\partial_{w_2},\qquad \mathbf{Y}_2=\partial_{w_1}.
\end{equation}
The conditions (\ref{nuevo1}) would imply 
\begin{equation}
\label{nuevo2} \mathbf{Y}_1(w_2)=1,\qquad \mathbf{Y}_2(w_2)=0,
\end{equation}
and 
\begin{equation}
\label{nuevo3} \mathbf{Y}_1(w_1)=0,\qquad \mathbf{Y}_2(w_1)=1.
\end{equation}
Since $\mathbf{Y}_1=f_1\partial_u+f_1\lambda_1\partial_{u_x}$ and $\mathbf{Y}_2=f_2\partial_u+f_2\lambda_2\partial_{u_x}$, equations (\ref{nuevo2})
would imply:
%\begin{equation}
%\label{nuevo4}
$$\begin{array}{l}
\Zu(w_2)=({w_2})_u\Zu(u)+({w_2})_{u_x}\Zu(u_x) =f_1({w_2})_u+\lambda_1f_1({w_2})_{u_x}=1,\\{}\\
\Zd(w_2)=({w_2})_u\Zd(u)+({w_2})_{u_x}\Zd(u_x) =f_2({w_2})_u+\lambda_2f_2({w_2})_{u_x}=0.
\end{array} $$
%\end{equation}
and then $({w_2})_u$ and $({w_2})_{u_x}$ would be defined by 
\begin{equation} \label{m2}
({w_2})_u=\dfrac{\lambda_2}{f_1(\lambda_2-\lambda_1)},\qquad ({w_2})_{u_x}=-\dfrac{1}{f_1(\lambda_2-\lambda_1)}.
\end{equation}
The local existence of such function $w_2$ is warranted because the mixed partials coincide:
%\begin{equation}
%\label{m2porcuadratura} 
$$\left(\dfrac{\lambda_2}{f_1(\lambda_2-\lambda_1)}\right)_{u_x}=-\left(\dfrac{1}{f_1(\lambda_2-\lambda_1)}\right)_u;$$
%\end{equation}
this can be checked by using that $\mathbf{X}_2(f_1)=\rho_1f_1$ and a straightforward  calculation. It is clear that the conditions (\ref{m2}) imply that $w_2$ can be constructed by quadratures if $f_1$ is known.  

A similar reasoning can be followed to prove the local existence of a function $w_1$ satisfying (\ref{nuevo1}), by using  system (\ref{nuevo3}) instead of (\ref{nuevo2}); $w_1$  satisfies 
\begin{equation}\label{m1}
({w_1})_{u}=\dfrac{\lambda_1}{f_2(\lambda_1-\lambda_2)},\qquad ({w_1})_{u_{x}}=-\dfrac{1}{f_2(\lambda_1-\lambda_2)}.
\end{equation}

For further reference, the following proposition collects some   properties of the  considered coordinate system $\{x,w_1,w_2\}$.

\begin{propo}
\label{rectificacion1} 
{\rm Let   $(\partial_u,\lambda_1)$ and $(\partial_u,\lambda_2)$ be the canonical representatives of two non-equivalent generalized  $\mathcal{C}^{\infty}-$sym\-me\-tries of (\ref{orden2}).
 Consider the vector fields $\Zu$ and $\Zd$ given in Theorem \ref{teocommuting}. A local system of coordinates $\{x,w_1,w_2\}$ on $M^{(1)}$ in which
$\Zu=\partial_{w_2}$ and $\Zd=\partial_{w_1}$ can be constructed by  quadratures by using the infinitesimals of $\Zu$ and $\Zd$ .}
\end{propo}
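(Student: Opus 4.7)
The plan is to fill in the computation already outlined in the paragraphs immediately preceding the statement, making explicit that, once the commuting pair $\mathbf{Y}_1,\mathbf{Y}_2$ of Theorem \ref{teocommuting} is in hand, the straightening coordinates $w_1,w_2$ require only quadratures.

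First I would translate the straightening conditions $\mathbf{Y}_1=\partial_{w_2}$ and $\mathbf{Y}_2=\partial_{w_1}$ into scalar PDEs. Since $\mathbf{Y}_1=f_1\partial_u+f_1\lambda_1\partial_{u_x}$ and $\mathbf{Y}_2=f_2\partial_u+f_2\lambda_2\partial_{u_x}$ carry no $\partial_x$-component, the four equations (\ref{nuevo2}) and (\ref{nuevo3}) reduce, for each of $w_1,w_2$, to a $2\times 2$ algebraic linear system in the partials with respect to $u$ and $u_x$. The coefficient determinant is $f_1 f_2(\lambda_2-\lambda_1)$, which is non-zero because $\lambda_1\neq\lambda_2$ by hypothesis and $f_1,f_2$ are non-vanishing by construction. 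Inverting the two systems recovers the explicit expressions (\ref{m2}) and (\ref{m1}) for the partials of $w_2$ and $w_1$.

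The crux of the proof is then to check that each of these overdetermined first-order systems is compatible, i.e.\ that the cross-derivatives $\partial_{u_x}((w_i)_u)$ and $\partial_u((w_i)_{u_x})$ coincide. This is where the structural information in (\ref{f1f2}) is used: the relations $\mathbf{X}_2(f_1)=\rho\, f_1$ and $\mathbf{X}_1(f_2)=\rho\, f_2$, combined with the fact that both $\lambda_1$ and $\lambda_2$ solve the determining equation (\ref{eqdet2}), collapse the difference of the two mixed partials to zero after a direct but not short calculation. Conceptually, this compatibility is exactly the content of the commutation $[\mathbf{Y}_1,\mathbf{Y}_2]=0$ restricted to slices $\{x=\mathrm{const}\}$. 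This verification is the only non-routine step in the argument and is the main obstacle.

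Once compatibility is established, on each slice $\{x=\mathrm{const}\}$ the 1-form $(w_i)_u\, du+(w_i)_{u_x}\, du_x$ is closed, so $w_i$ is obtained by integrating it along any convenient path in the $(u,u_x)$-plane; this amounts to a single quadrature and leaves only an irrelevant function of $x$ free. To conclude, I would verify that $\{x,w_1,w_2\}$ is a genuine local chart by computing the Jacobian with respect to $(x,u,u_x)$: using (\ref{m1}) and (\ref{m2}) one obtains $(w_1)_u(w_2)_{u_x}-(w_1)_{u_x}(w_2)_u = \bigl[f_1 f_2(\lambda_2-\lambda_1)\bigr]^{-1}\neq 0$, which reflects that $\mathbf{Y}_1$ and $\mathbf{Y}_2$ are linearly independent and transverse to $\partial_x$. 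In these coordinates the identities $\mathbf{Y}_1=\partial_{w_2}$ and $\mathbf{Y}_2=\partial_{w_1}$ hold by construction.
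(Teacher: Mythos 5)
Your proposal is correct and follows essentially the same route as the paper: translate the straightening conditions into the $2\times 2$ linear systems yielding (\ref{m1})--(\ref{m2}), verify the mixed-partial compatibility using (\ref{f1f2}), and integrate by quadrature. One minor remark: the determining equation (\ref{eqdet2}) is not actually needed for the compatibility check --- the identity $\mathbf{X}_2(f_1)=\rho f_1$ together with the definition (\ref{ro}) of $\rho$ already collapses the difference of the mixed partials to zero, which is consistent with your (correct) observation that the compatibility is just $[\mathbf{Y}_1,\mathbf{Y}_2]=0$ restricted to the slices $\{x=\mathrm{const}\}$.
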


 The functions $f_1$ and $f_2$ that satisfy (\ref{f1f2}) let the construction  by quadratures of the
invariants $w_1,w_2$ of  $\Zu,\Zd$, respectively.  Since $\Zu=f_1\Yu$ and $\Zd=f_2\Yd,$ the functions $w_1,w_2$ are also invariants of $\Yu$ and $\Yd,$
respectively.   These invariants can be used to construct reduced equations associated to each $\mathcal{C}^{\infty}-$sym\-me\-try as follows.

   The first equation in (\ref{corchetesnuevos})
shows that if $w_1$ is an invariant of $\Zu$ then $\phi_1=\A(w_1)$ is also an invariant of $\Zu$ and $\phi_1$ can be expressed in terms of $\{x,w_1\}.$ Similarly,
$\phi_2=\A(w_2)$ is an invariant of $\mathbf{Y}_2$ which can be expressed in terms of $\{x,w_2\}.$  Therefore, in terms of the local system of coordinates $\{x,w_1,w_2\}$, the vector field $\mathbf{A}$ becomes
\begin{equation}\label{Anuevo}
{\mathbf{A}}=\partial_x+{\phi}_1(x,w_1)\partial_{w_1}+{\phi}_2(x,w_2)\partial_{w_2}.
\end{equation}
In consequence
\begin{equation} \label{reducidas} (w_1)_x={\phi}_1(x,w_1) \quad
\mbox{ and } \quad (w_2)_x={\phi}_2(x,w_2)
\end{equation} are two reduced equations associated to $(\partial_u,\lambda_1)$ and $(\partial_u,\lambda_2),$ respectively. The respective vector fields associated to the reduced equations (\ref{reducidas}) are \begin{equation}
\label{camposreducidas} \campoA_1=\partial_x+{\phi}_1(x,w_1)\partial_{w_1} \quad \mbox{ and } \quad \campoA_2=\partial_x+{\phi}_2(x,w_2)\partial_{w_2}.
\end{equation}

In the next example it is  illustrated the procedure given in the proof of the Proposition \ref{rectificacion1} to compute invariants of the vector fields (\ref{Yejemplo}) by using (\ref{f1f2ej}) and quadratures alone. These invariants will be used to compute the
reduced equations associated to the  $\mathcal{C}^{\infty}-$symmetries defined by (\ref{lambdasymeje}).

\begin{example}\label{eje2}
{\rm By proceeding with the study of equation (\ref{painleve}) made in Example \ref{eje1}, let $\Zu$ and $\Zd$ be
 the vector fields given in (\ref{Yejemplo}). The functions $\lambda_1$ and $\lambda_2$ given in (\ref{lambdasymeje}) and the functions $f_1=f_2=u^2$ given in  (\ref{f1f2ej}) will be used to construct the systems corresponding to (\ref{m1}) and (\ref{m2}):
% \begin{equation} \label{meje}
$$ \begin{array}{ll}
({w_1})_{u}=\dfrac{u_x-u^2+1}{2u^2},& ({w_1})_{u_{x}}=-\dfrac{1}{2
u},\\
 ({w_2})_u=-\dfrac{u_x-u^2-1}{2u^2},& ({w_2})_{u_x}=\dfrac{1}{2
 u}.\end{array}
 $$
% \end{equation}
 Both systems can be easily solved by quadratures and  \begin{equation}\label{wejemplo}
 w_1=-{\frac {u_x+{u_{{}}}^{2}+1}{2u_{{}}}}\quad \mbox{and} \quad w_2={\frac {u_x+{u_{{}}}^{2}-1}{2u_{{}}}}\end{equation} are, respectively, particular solutions.

It can be checked that, for this example,   the vector fields  (\ref{Yejemplo}) become $\Zu=\partial_{w_2}$ and $\Zd=\partial_{w_1}$ in variables $\{x,w_1,w_2\}.$
In these variables the vector field $\campoA$ associated to equation (\ref{painleve}) is
%\begin{equation}
%\label{Anuevoeje}
$${\mathbf{A}}=\partial_x+\left({w_1}^{2}-\dfrac{1}{2}\,\bigl(F(x) +1\bigr)\right)\partial_{w_1}-\left({w_2}^{2}-\dfrac{1}{2}\,\bigl(F(x)  -1\bigr)\right)\partial_{w_2},$$
%  \end{equation}
   which according to (\ref{reducidas}) provides the following reduced equations

\begin{equation}\label{reduceje} (w_1)_x=\displaystyle{w_1^{2}-\dfrac{1}{2}\,\bigl(F ( x ) +1\bigr)},\qquad (w_2)_x=-w_2^{2}+\dfrac{1}{2}\,\bigl(F ( x ) -1\bigr).
\end{equation} These equations were also obtained by the authors of \cite{guha2013lambda} by following a different procedure.} \hfill\qedsymbol \end{example}

Thus far, the relations (\ref{corchetesY}) have been simplified by changing the initial generalized $\mathcal{C}^{\infty}-$sy\-mme\-tries by other equivalent
ones that define the vector fields $\Zu$ and $\Zd$ given in (\ref{Y}). As a consequence, reduced equations associated to the $\mathcal{C}^{\infty}-$sy\-mme\-tries can be constructed by quadratures.  

In the next section it is shown  how to construct two standard generalized symmetries that are equivalent to the generalized $\mathcal{C}^{\infty}-$sy\-mme\-tries that
define $\Zu$ and $\Zd.$ Once these generalized symmetries are known, the initial equation (\ref{orden2}) can be completely integrated by quadratures.

\section{Commuting generalized symmetries from generalized $\mathcal{C}^{\infty}-$sy\-mme\-tries}\label{s4}

In this section it is investigated the existence of two  non identically zero functions $g_1,g_2\in \mathcal{C}^{\infty}(M^{(1)})$ such that
\begin{equation}
\label{zetas}
[g_1\Zu, \mathbf{A}]=[g_2\Zd, \mathbf{A}]=[g_1\Zu, g_2\Zd]=0,
\end{equation}
 where $\Zu$ and $\Zd$ are the vector fields constructed in Theorem \ref{teocommuting} from two known $\mathcal{C}^{\infty}-$symmetries of equation (\ref{orden2}).
In case of existence of such functions $g_1,g_2$, equations   (\ref{corchetesnuevos})  and the properties of the Lie bracket would imply that $g_1,g_2$ satisfy

\begin{equation}
\label{sistemas}
\begin{array}{ll}
\left\{\begin{array}{l}
\mathbf{A}(g_1)=\rho_1 g_1,\\
\Zd(g_1)=0,
\end{array}\right. &
\left\{\begin{array}{l}
\mathbf{A}(g_2)=\rho_2 g_2,\\
\Zu(g_2)=0,
\end{array}\right.
\end{array}
\end{equation}
where the functions $\rho_1$ and $\rho_2$ are defined by  (\ref{lambdasnew}). Conversely, it can be checked that if $g_1,g_2\in \mathcal{C}^{\infty}(M^{(1)})$ are respectively solutions of the systems in  (\ref{sistemas}) then $g_1,g_2$ satisfy (\ref{zetas}).

Observe that each  system in  (\ref{sistemas}) is a coupled system of  two first-order partial differential equations with three independent variables whose compatibility is, \textit{a priori}, not obvious. The local  system of
coordinates $\{x,w_1,w_2\}$  obtained in Proposition
\ref{rectificacion1}  will be used to reduce simultaneously each system in (\ref{sistemas}) to a single partial differential equation with two independent variables, and the compatibility of the systems in  (\ref{sistemas}) will be straightforward.
%(see equations
%(\ref{pdered}) below).
%In order to prove the compatibility of these systems, 
%%
%Our next objective is to use the vector fields $\{{\mathbf{A}},{\mathbf{Y_1}},{\mathbf{Y}_2}\}$ to construct some solvable structures that let us compute first integrals of ${\mathbf{A}}$ by quadrature:

\begin{lem} \label{lema8}{\rm There exist two functions ${g_1}={g_1}(x,u,u_x)$ and ${g_2}={g_2}(x,u,u_x)$ which satisfy the corresponding system in (\ref{sistemas}).
% such that all the vector fields in $\{{\mathbf{A}},{g_1}{\mathbf{Y_1}},{g_2}{\mathbf{Y}_2}\}$ commute.
}
\end{lem}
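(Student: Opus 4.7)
My plan is to work in the local coordinate system $\{x,w_1,w_2\}$ supplied by Proposition \ref{rectificacion1}, in which $\mathbf{Y}_1=\partial_{w_2}$, $\mathbf{Y}_2=\partial_{w_1}$, and $\mathbf{A}$ takes the form (\ref{Anuevo}). In this chart the equation $\mathbf{Y}_2(g_1)=0$ becomes simply $\partial_{w_1}g_1=0$, forcing $g_1=g_1(x,w_2)$, after which $\mathbf{A}(g_1)=\rho_1 g_1$ collapses to the scalar equation
$$\partial_x g_1 + \phi_2(x,w_2)\,\partial_{w_2}g_1 = \rho_1\, g_1.$$
The symmetric argument applies to $g_2=g_2(x,w_1)$, so the task reduces to producing a nontrivial local solution of a single linear first-order PDE in two independent variables.

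For this reduction to actually live on the two-variable leaf, I first need $\rho_1$ to be a function of $(x,w_2)$ alone (and, correspondingly, $\rho_2$ a function of $(x,w_1)$ alone). This is the one step with real content, and I would extract it from the Jacobi identity applied to $\{\mathbf{A},\mathbf{Y}_1,\mathbf{Y}_2\}$. Using the brackets in (\ref{corchetesnuevos}), in particular $[\mathbf{Y}_1,\mathbf{Y}_2]=0$, and the Leibniz rule $[fX,Y]=f[X,Y]-Y(f)X$, the identity
$$[\mathbf{Y}_1,[\mathbf{Y}_2,\mathbf{A}]] + [\mathbf{Y}_2,[\mathbf{A},\mathbf{Y}_1]] + [\mathbf{A},[\mathbf{Y}_1,\mathbf{Y}_2]] = 0$$
collapses to
$$\mathbf{Y}_1(\rho_2)\,\mathbf{Y}_2 \;-\; \mathbf{Y}_2(\rho_1)\,\mathbf{Y}_1 \;=\; 0.$$
Since $\lambda_1\neq\lambda_2$ makes $\mathbf{Y}_1$ and $\mathbf{Y}_2$ linearly independent, I conclude $\mathbf{Y}_2(\rho_1)=\mathbf{Y}_1(\rho_2)=0$, which in the chart $\{x,w_1,w_2\}$ is precisely $\rho_1=\rho_1(x,w_2)$ and $\rho_2=\rho_2(x,w_1)$.

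With that in hand, the equation for $g_1$ is $\mathbf{A}_2(g_1)=\rho_1 g_1$, where $\mathbf{A}_2=\partial_x+\phi_2(x,w_2)\partial_{w_2}$ is the reduced vector field in (\ref{camposreducidas}); along any integral curve of $\mathbf{A}_2$, parametrised by $s$, this is the scalar linear ODE $dg_1/ds=\rho_1 g_1$, whose solution is the exponential of an antiderivative of $\rho_1$. Prescribing any nowhere-vanishing smooth datum on a noncharacteristic transversal to $\mathbf{A}_2$ therefore produces a smooth, nontrivial $g_1$ on a neighbourhood; constructing $g_2$ is identical. Finally, the converse remark already made after (\ref{sistemas}) guarantees that any such $g_1,g_2$ automatically satisfy the three commutation relations (\ref{zetas}), which closes the argument. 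The main obstacle, as indicated, is the Jacobi-identity step that separates the variables of $\rho_1$ and $\rho_2$; after that, existence reduces to the classical local solvability of a homogeneous linear first-order PDE.
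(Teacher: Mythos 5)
Your proof is correct and follows essentially the same route as the paper: pass to the straightened coordinates $\{x,w_1,w_2\}$ of Proposition \ref{rectificacion1}, use $\mathbf{Y}_2(g_1)=\mathbf{Y}_1(g_2)=0$ to force $g_1=g_1(x,w_2)$, $g_2=g_2(x,w_1)$, and reduce each system to a single linear first-order PDE in two variables solvable along characteristics. The only (minor) difference is that you justify $\rho_1=\rho_1(x,w_2)$ via the Jacobi identity applied to $\{\mathbf{A},\mathbf{Y}_1,\mathbf{Y}_2\}$, whereas the paper reads it off directly from (\ref{Anuevo}), where $\rho_1=(\phi_2)_{w_2}$; both are valid and your characteristic-curve argument makes the existence of a nowhere-vanishing solution slightly more explicit.
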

\begin{proof}

 Recall that in the coordinates $\{x,w_1,w_2\}$ given by  Proposition
 \ref{rectificacion1} the vector field $\Zu$ (resp. $\Zd$) can be written as  $\Zu=\partial_{w_2}$ (resp. $\Zd=\partial_{w_1}$). Since $g_1,g_2$ must satisfy $\Zd(g_1)=\Zu(g_2)=0,$ the functions $g_1$ and $g_2$ in coordinates $\{x,w_1,w_2\}$ must be of the form
${g}_1={g}_1(x,w_2)$ and ${g}_2={g}_2(x,w_1),$ respectively. 
Since $g_1$ (resp. $g_2$) must satisfy $\mathbf{A}(g_1)=\rho_1g_1$ (resp. $\mathbf{A}(g_2)=\rho_2g_2$), the expression of $\campoA$ in coordinates $\{x,w_1,w_2\}$ given in (\ref{Anuevo}), suggests that $g_1=g_1(x,w_2)$ (resp. $g_2=g_2(x,w_1)$)  could be any particular solution of the first-order partial differential equation
\begin{equation}
\label{pdered} ({{g}_1})_x+({{g}_1})_{w_2}{\phi}_2={{g}_1}\,({{\phi}_2})_{w_2}  \quad \bigl( \mbox{resp.}\quad
({{g}_2})_x+({{g}_2})_{w_1}{\phi}_1= {{g}_2}\,({{\phi}_1})_{w_1} \bigr).
\end{equation}
It can be checked that if $\bar{g}_1(x,w_1)$ (resp. $\bar{g}_2(x,w_2)$) is a solution for (\ref{pdered}) then, 
by writing $w_1$ and $w_2$  in terms of $\{x,u,u_x\}$, the functions   $g_1=\bar{g}_1(x,w_2(x,u,u_x))$ and $g_2=\bar{g}_2(x,w_1(x,u,u_x))$  are solutions of the corresponding system in (\ref{sistemas}).
%are solutions of the
%\begin{equation}\label{g1g2}
% \frac{{\mathbf{A}}({g_1})}{{g_1}}= \rho_{1},\qquad \frac{{\mathbf{A}}({g_2})}{{g_2}}= \rho_2.
% \end{equation}
%
\end{proof}

As a consequence of the previous results, 
 the main theorem in this section can now be proved: 

\begin{thm} \label{teofinal}{\rm
Let $(\partial_u,\lambda_1)$ and $(\partial_u,\lambda_2)$ be the canonical representatives of two non-equivalent generalized $\mathcal{C}^{\infty}-$sy\-mme\-tries of  equation (\ref{orden2}) and let 
$\mathcal{A}_1$ and $\mathcal{A}_2$ be their respective equivalence classes. 
Let $f_1,f_2\in\mathcal{C}^{\infty}(M^{(1)})$  be two functions satisfying (\ref{f1f2}) and 
assume that  $g_1,g_2\in\mathcal{C}^{\infty}(M^{(1)})$ satisfy (\ref{sistemas}). Let $h_1,h_2\in  \mathcal{C}^{\infty}(M^{(1)})$ be the functions defined by
\begin{equation}
\label{vfinales} h_i=f_ig_i, \quad (i=1,2),
\end{equation}
and denote  ${\mathbf{Z}_i}=h_i\partial_{u}+\campoA(h_i)\partial_{u_x},$ for $i=1,2.$ The following relations hold:
\begin{enumerate}
\item $(h_1\partial_u,0)\in \mathcal{A}_1$ and $(h_2\partial_u,0)\in \mathcal{A}_2$.
\item The vector fields  $\mathbf{Z_1}$ and $\mathbf{Z}_2$ are  generalized sy\-mme\-tries of (\ref{orden2}). 
\item The system $\{\mathbf{Z_1},\mathbf{Z}_2\}$ is a system of commuting generalized symmetries of $\mathbf{A}$:
\begin{equation}
[\mathbf{Z_1},\mathbf{A}]=[\mathbf{Z}_2,\mathbf{A}]=[\mathbf{Z_1},\mathbf{Z}_2]=0.
\end{equation}
\end{enumerate}
}
\end{thm}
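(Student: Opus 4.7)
The plan is to recognize that each $\mathbf{Z}_i$ is nothing other than $g_i\mathbf{Y}_i$ in disguise, after which all three claims fall out from the commutator relations (\ref{corchetesnuevos}) already established in Theorem \ref{teocommuting}. The main identification step is the following: by (\ref{lambdasnew}) we have $\mathbf{A}(f_i)=(\lambda_i-\rho_i)f_i$, and by (\ref{sistemas}) we have $\mathbf{A}(g_i)=\rho_i g_i$, so the Leibniz rule gives $\mathbf{A}(h_i)=\mathbf{A}(f_ig_i)=\lambda_i f_ig_i=\lambda_i h_i$. Consequently
\begin{equation*}
\mathbf{Z}_i=h_i\partial_u+\mathbf{A}(h_i)\partial_{u_x}=h_i(\partial_u+\lambda_i\partial_{u_x})=h_i\mathbf{X}_i=g_i\mathbf{Y}_i.
\end{equation*}
This is the key computation I would do first.

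Given this identification, claim (3) is an immediate consequence of the bracket rules for multiplying a vector field by a function. For the first bracket, using (\ref{corchetesnuevos}) and $\mathbf{A}(g_i)=\rho_ig_i$,
\begin{equation*}
[\mathbf{Z}_i,\mathbf{A}]=[g_i\mathbf{Y}_i,\mathbf{A}]=g_i[\mathbf{Y}_i,\mathbf{A}]-\mathbf{A}(g_i)\mathbf{Y}_i=g_i\rho_i\mathbf{Y}_i-\rho_ig_i\mathbf{Y}_i=0.
\end{equation*}
For the mutual commutator, the conditions $\mathbf{Y}_1(g_2)=\mathbf{Y}_2(g_1)=0$ from (\ref{sistemas}) together with $[\mathbf{Y}_1,\mathbf{Y}_2]=0$ from (\ref{corchetesnuevos}) yield
\begin{equation*}
[\mathbf{Z}_1,\mathbf{Z}_2]=[g_1\mathbf{Y}_1,g_2\mathbf{Y}_2]=g_1g_2[\mathbf{Y}_1,\mathbf{Y}_2]+g_1\mathbf{Y}_1(g_2)\mathbf{Y}_2-g_2\mathbf{Y}_2(g_1)\mathbf{Y}_1=0.
\end{equation*}

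For claim (1), I would apply the generalized prolongation formula (\ref{glambdaprol1}) to $h_i\partial_u$ with $\lambda=0$: since $\xi=0$ and $\eta=h_i$ it gives $(h_i\partial_u)^{[0,(1)]}=h_i\partial_u+\mathbf{A}(h_i)\partial_{u_x}=\mathbf{Z}_i$. By the identification above, $\mathbf{Z}_i=h_i\mathbf{X}_i$ is proportional to $\mathbf{X}_i=(\partial_u)^{[\lambda_i,(1)]}$, so the triple $\{\mathbf{A},\mathbf{X}_i,\mathbf{Z}_i\}$ is trivially dependent over $\mathcal{C}^{\infty}(M^{(1)})$; Definition \ref{defeq} then places $(h_i\partial_u,0)$ in $\mathcal{A}_i$. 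Claim (2) is then automatic: the generalized $\mathcal{C}^{\infty}$-symmetry condition (\ref{corcheteA}) for $\lambda=0$ and $\xi=0$ reduces to $[\mathbf{Z}_i,\mathbf{A}]=0$, which is already established in claim (3), so $\mathbf{Z}_i$ is a standard generalized symmetry of (\ref{orden2}).

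I do not anticipate a real obstacle; the only subtle point is packaging the computation $\mathbf{A}(h_i)=\lambda_i h_i$ correctly, because it is precisely what makes $\mathbf{Z}_i$ collapse to the scalar multiple $g_i\mathbf{Y}_i=h_i\mathbf{X}_i$ and thereby converts the two $\mathcal{C}^{\infty}$-symmetries with nontrivial $\lambda_i$ into honest generalized symmetries with $\lambda=0$. Once this reduction is on the table, every remaining item is a one-line bracket manipulation invoking (\ref{corchetesnuevos}) and (\ref{sistemas}).
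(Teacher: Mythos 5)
Your proposal is correct and follows essentially the same route as the paper: the central computation $\mathbf{A}(h_i)=\lambda_i h_i$ (the paper's (\ref{cuentas})), the identification $\mathbf{Z}_i=h_i\mathbf{X}_i=g_i\mathbf{Y}_i$ (the paper's (\ref{losZ})), and the same two Lie-bracket manipulations for $[\mathbf{Z}_i,\mathbf{A}]=0$ and $[\mathbf{Z}_1,\mathbf{Z}_2]=0$. The only cosmetic difference is that you expand $[\mathbf{Z}_i,\mathbf{A}]$ via $g_i\mathbf{Y}_i$ where the paper uses $h_i\mathbf{X}_i$; both are equivalent one-line computations.
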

\begin{proof}
With the previous notations,  
by using (\ref{lambdasnew}) and (\ref{sistemas}), 
\begin{equation}\label{cuentas}
\displaystyle{\begin{array}{lllll} \displaystyle\lambda_i-\frac{\mathbf{A}(h_i)}{h_i}
&=&\displaystyle\lambda_i-\frac{\mathbf{A}(f_ig_i)}{f_ig_i}&=&\displaystyle\lambda_i-\frac{\mathbf{A}(f_i)}{f_i}-\frac{\mathbf{A}(g_i)}{g_i}\\
&=&\displaystyle\rho_i-\frac{\mathbf{A}(g_i)}{g_i}&=&0.
\end{array}}
\end{equation} 
This proves that, for $i=1,2,$ the pair $(h_i\partial_u,0)$ is a generalized $\mathcal{C}^{\infty}-$sy\-mme\-try of (\ref{orden2}), which is $\mathbf{A}-$equi\-va\-lent to $(\partial_u,\lambda_i),$ according to Definition \ref{defeq}. In fact, by denoting  $\mathbf{Z}_i=h_i\partial_{u}+\campoA(h_i)\partial_{u_x}$, for $i=1,2$, the vector fields $\mathbf{Z}_1$ and $\mathbf{Z}_2$ are generalized symmetries of (\ref{orden2}).  Relations (\ref{cuentas}) imply

\begin{equation}
\label{losZ}
\mathbf{Z}_i=h_i\partial_{u}+\campoA(h_i)\partial_{u_x}=g_i\mathbf{Y}_i= h_i\mathbf{X}_i,
\end{equation} for $i=1,2.$ 

By using (\ref{corchetesY}), (\ref{cuentas}), (\ref{losZ}) and the properties of the Lie bracket, it follows
% \begin{equation}\label{cuentas2}
$$\displaystyle{\begin{array}{l} \displaystyle[\mathbf{Z}_i,\campoA]
=\displaystyle[h_i\mathbf{X}_i,\campoA]= h_i[\mathbf{X}_i,\campoA]-\campoA(h_i)\mathbf{X}_i
=h_i\lambda_i\mathbf{X}_i-\campoA(h_i)\mathbf{X}_i=0,\end{array}}$$
%\end{equation} 
for $i=1,2.$
 Finally, by (\ref{corchetesnuevos}), (\ref{sistemas}) and (\ref{losZ})
%\begin{equation}
$$[\mathbf{Z}_1,\mathbf{Z}_2]=[g_1\Zu,g_2\Zd] =g_1g_2[\Zu,\Zd]+g_1\Zu(g_2)\cdot\Zd-g_2\Zd(g_1)\cdot \Zu=0.$$
%\end{equation}

\end{proof}

The results obtained in this section will now be used  in the following example to construct two commuting generalized symmetries from the $\mathcal{C}^{\infty}-$sym\-me\-tries (\ref{lambdasymeje}) of equation (\ref{painleve}).

\begin{example}\label{eje3}
{\rm The studies
 of equation (\ref{painleve}) made in Examples \ref{eje1} and \ref{eje2} will be  continued in this example. Lemma \ref{lema8} will be used to construct two functions $g_1$ and $g_2$ satisfying (\ref{sistemas}). In terms of the variables $\{x,w_1,w_2\}$ where $w_1$ and $w_2$ are given by (\ref{wejemplo}), the required functions $g_1=g_1(x,w_2)$ and $g_2=g_2(x,w_1)$ are solutions of the equations (\ref{pdered}):
\begin{equation}
\label{pderedeje}
\begin{array}{l}
({g}_1)_x-\left({w_2}^{2}-\dfrac{1}{2}\,\bigl(F(x)  -1\bigr)\right)({g}_1)_{w_2}=-2w_2{g}_1,\\
({g}_2)_x+\left({w_1}^{2}-\dfrac{1}{2}\,\bigl(F(x)  +1\bigr)\right)({g}_2)_{w_1}=2w_1{g}_2.
\end{array}
\end{equation}

In what follows,  explicit expressions of some solutions of (\ref{pderedeje}) will be obtained in  terms of solutions of the $2$nd-order linear equations
\begin{equation}
\label{lineal}
\psi_{xx}=\dfrac{1}{2}\,\bigl(F(x) +1\bigr)\psi,\qquad \theta_{xx}=\displaystyle{\dfrac{1}{2}\,\bigl(F(x) -1\bigr)}\theta.
\end{equation}
These are the linear equations obtained from the  Riccati-type equations (\ref{reduceje}) by means of the standard transformations  $w_1=-\psi'/\psi$ and $w_2=\theta'/\theta,$  respectively. Let the pairs  $\psi_1,\psi_2$ and  $\theta_1,\theta_2$ be linearly independent solutions of the respective equations given in (\ref{lineal}). Their corresponding Wronskians will be denoted by $W_1=W(\psi_1,\psi_2)=\psi_1\psi_2'-\psi_1'\psi_2$ and $W_2=W(\theta_1,\theta_2)=\theta_1\theta_2'-\theta_1'\theta_2$. It can be checked that the functions
\begin{equation}\label{gtilde}
{g}_1(x,w_2)=\dfrac{1}{W_2} \left( \theta_2 w_2-\theta_2'  \right) ^{2}
\quad \mbox{and} \quad
{g}_2(x,w_1)=\dfrac{1}{W_1}\left( \psi_2 w_1+\psi_2'  \right) ^{2}
\end{equation} satisfy the corresponding equation in (\ref{pderedeje}). By using (\ref{wejemplo}) and the functions $f_1$ and $f_2$ obtained in (\ref{f1f2ej}) the functions $h_1$ and $h_2$ given by (\ref{vfinales}) become
\begin{equation}
\label{heje}
h_1=\dfrac{1}{4W_2} \left((  u_x+{u}^{2}-1) \theta_2-2u\theta_2'  \right) ^{2}
,\quad
h_2=\dfrac{1}{4W_1} \left((  u_x+{u}^{2}+1) \psi_2-2u\psi_2'  \right) ^{2}
%,\quad
%h_2=\dfrac{u^2}{W_1}\left( - {\dfrac {u_x+{u_{{}}}^{2}+1}{2u_{{}}}}\psi_2+\psi_2'  \right) ^{2}
.
\end{equation} 
Finally, according to (\ref{losZ}), two commuting generalized symmetries for equation (\ref{painleve}) are given by

%\begin{equation}
%\label{Zeje}
$$\begin{array}{l}
\mathbf{Z}_1=\dfrac{1}{4W_2} \left((  u_x+{u}^{2}-1) \theta_2-2u\theta_2'  \right) ^{2}
\left(\partial_u+\left({\dfrac {u_x}{u_{{}}}}-u_{{}}+\dfrac{1}{u}\right)\partial_{u_x}\right),\\
\mathbf{Z}_2=\dfrac{1}{4W_1} \left((  u_x+{u}^{2}+1) \psi_2-2u\psi_2'  \right) ^{2}
\left(\partial_u+\left({\dfrac {u_x}{u_{{}}}}-u_{{}}-\dfrac{1}{u}\right)\partial_{u_x}\right).
\end{array}
$$
%\end{equation}
} \hfill\qedsymbol \end{example}

In the next section it will be shown that  the system $\{\mathbf{Z_1},\mathbf{Z}_2\}$ of commuting symmetries constructed from two non-equivalent $\mathcal{C}^{\infty}-$symmetries of equation (\ref{orden2}) permits the integration of the equation by using quadratures alone.

\section{Generalized $\mathcal{C}^{\infty}-$sy\-mme\-tries and integrability by quadratures}\label{s5}

As a consequence of Theorem \ref{teofinal}, two non-equivalent generalized $\mathcal{C}^{\infty}-$sy\-mme\-tries of  equation (\ref{orden2}) can be used  to construct a
system of commuting symmetries of $\campoA.$ Next,  this system is used to compute \textbf{by quadratures}  first integrals of the equation (\ref{orden2}) associated to the
generalized $\mathcal{C}^{\infty}-$sy\-mme\-tries. The idea is similar to the one used in the proof of Proposition \ref{rectificacion1}: to construct the first integrals $I_1=I_1(x,u,u_x)$ and $I_1=I_1(x,u,u_x)$ satisfying respectively the systems

\begin{equation}
\label{sistemasnuevos}
\begin{array}{ll}
\left\{\begin{array}{l}\mathbf{A}(I_1) =0, \\ \mathbf{Z_1}(I_1)=0, \\ \mathbf{Z}_2(I_1)=1, \end{array}\right.
& \left\{\begin{array}{l}\mathbf{A}(I_2) =0, \\ \mathbf{Z_1}(I_2)=1, \\ \mathbf{Z_2}(I_2)=0.\end{array}\right.
\end{array}
\end{equation}
By Fröbenius theorem, both system are compatible. By using the expressions $\mathbf{A}=\partial_x+u_x\partial_u+\phi\partial_{u_x}$, $\mathbf{Z}_1=(f_1g_1)\partial_u+\campoA(f_1g_1)\partial_{u_x}$, 
$\mathbf{Z}_2=(f_2g_2)\partial_u+\campoA(f_2g_2)\partial_{u_x}$, $\mathbf{A}(f_1g_1)=\lambda_1(f_1g_1)$ and  $\mathbf{A}(f_2g_2)=\lambda_2(f_2g_2)$,
it can be chec\-ked that  such functions $I_1$ and $I_2$ must satisfy

\begin{equation}\label{I1}
{(I_1)}_x=\dfrac{\lambda_1u_x-\phi}{f_2g_2(\lambda_2-\lambda_1)},\quad {(I_1)}_u=\dfrac{-\lambda_1}{f_2g_2(\lambda_2-\lambda_1)},\quad
{(I_1)}_{u_x}=\dfrac{1}{f_2g_2(\lambda_2-\lambda_1)},
\end{equation}
and
\begin{equation}\label{I2} {(I_2)}_x=\dfrac{\lambda_2u_x-\phi}{f_1g_1(\lambda_1-\lambda_2)},\quad
{(I_2)}_u=\dfrac{-\lambda_2}{f_1g_1(\lambda_1-\lambda_2)},\quad {(I_2)}_{u_x}=\dfrac{1}{f_1g_1(\lambda_1-\lambda_2)}.
\end{equation}

Observe that the existence of the function $I_1$  satisfying (\ref{sistemasnuevos}) could also be proved by checking  that the mixed derivatives of the functions in the right hand sides of (\ref{I1}) coincide:
$$
\label{mixed}
\begin{array}{ll}
\left(\dfrac{\lambda_1u_x-\phi}{f_2g_2(\lambda_2-\lambda_1)}\right)_u=\left(\dfrac{-\lambda_1}{f_2g_2(\lambda_2-\lambda_1)}\right)_x, &  \left(\dfrac{\lambda_1u_x-\phi}{f_2g_2(\lambda_2-\lambda_1)}\right)_{u_x}=\left(\dfrac{1}{f_2g_2(\lambda_2-\lambda_1)}\right)_x, \\ \\ \left(\dfrac{-\lambda_1}{f_2g_2(\lambda_1-\lambda_2)}\right)_{u_x}=\left(\dfrac{1}{f_2g_2(\lambda_2-\lambda_1)}\right)_u.  
\end{array}
$$ 
Similarly,  the existence of the function $I_2$  satisfying (\ref{sistemasnuevos}) could also be proved by checking that the mixed derivatives of the functions in the right hand sides of (\ref{I2}) do also coincide.
Therefore the functions $I_1$  and $I_2$ that satisfy (\ref{sistemasnuevos})  can locally be defined  by quadratures 
from (\ref{I1}) and (\ref{I2}). Since by (\ref{losZ}) $\mathbf{Z}_i=f_ig_i\mathbf{X}_i$, the function $I_i$ is a first integral of $\mathbf{X}_i$, for $i=1,2$. In other words, $I_1$ and $I_2$ are first integrals of $\campoA$ associated to $(\partial_u,\lambda_1)$ and $(\partial_u,\lambda_2)$ respectively.

Consequently, the following theorem holds:

\begin{thm}
\label{simetrias} {\rm Let $(\partial_u,\lambda_1)$ and $(\partial_u,\lambda_2)$ be the canonical representatives of two non-equivalent generalized $\mathcal{C}^{\infty}-$sy\-mme\-tries of equation
(\ref{orden2}). Let $f_1,f_2$ (resp. $g_1,g_2$) be some
pairs of functions in $\mathcal{C}^{\infty}(M^{(1)})$ satisfying (\ref{f1f2}) (resp. (\ref{sistemas})).
Two functionally independent first integrals $I_1$ and $I_2$ of $\campoA,$ associated to $(\partial_u,\lambda_1)$ and $(\partial_u,\lambda_2),$ respectively, can be
found by quadratures from (\ref{I1}) and (\ref{I2}).
}
\end{thm}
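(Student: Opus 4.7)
The plan is to deduce the theorem as a direct consequence of Theorem \ref{teofinal}, using the commuting frame $\{\mathbf{A},\mathbf{Z}_1,\mathbf{Z}_2\}$ together with the Fröbenius theorem and Cramer's rule; the role of the hypotheses is simply to guarantee that the coefficient matrix of the linear system derived from (\ref{sistemasnuevos}) is invertible.

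First, I would observe that the three vector fields $\mathbf{A},\mathbf{Z}_1,\mathbf{Z}_2$ are pointwise linearly independent on an open subset of $M^{(1)}$. Since $h_i=f_ig_i\neq 0$ and $\lambda_1\neq\lambda_2$, the $\partial_{u_x}$-components $\lambda_1 h_1$ and $\lambda_2 h_2$ of $\mathbf{Z}_1,\mathbf{Z}_2$ together with the $\partial_x$-component of $\mathbf{A}$ span the cotangent basis. Because the three fields pairwise commute by Theorem \ref{teofinal}, the pairs $\{\mathbf{A},\mathbf{Z}_1\}$ and $\{\mathbf{A},\mathbf{Z}_2\}$ are involutive distributions of rank two, and the Fröbenius theorem yields local smooth functions $I_1,I_2$ satisfying the respective systems (\ref{sistemasnuevos}); the normalization $\mathbf{Z}_2(I_1)=1$ (resp.\ $\mathbf{Z}_1(I_2)=1$) can be imposed since $\mathbf{Z}_2$ (resp.\ $\mathbf{Z}_1$) is transverse to the leaves of the first (resp.\ second) distribution.

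Next, to extract the quadrature formulas (\ref{I1})--(\ref{I2}), I would treat each system in (\ref{sistemasnuevos}) as a $3\times 3$ linear algebraic system in the unknowns $(I_i)_x,(I_i)_u,(I_i)_{u_x}$. Using the identity $\mathbf{A}(h_i)=\lambda_ih_i$, which follows from (\ref{cuentas}) in the proof of Theorem \ref{teofinal}, the coefficient matrix for $I_1$ is
\[
\begin{pmatrix} 1 & u_x & \phi \\ 0 & h_1 & \lambda_1 h_1 \\ 0 & h_2 & \lambda_2 h_2 \end{pmatrix},
\]
whose determinant equals $h_1h_2(\lambda_2-\lambda_1)\neq 0$. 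Cramer's rule produces exactly (\ref{I1}), and the analogous computation for $I_2$ gives (\ref{I2}). The one-forms $dI_i$ so obtained are automatically closed (this is the content of the Fröbenius compatibility), so each $I_i$ is reconstructed by three successive quadratures with no further integrability check needed.

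Finally, I would verify the two qualitative conclusions. Because $\mathbf{Z}_i=h_i\mathbf{X}_i$ with $h_i\neq 0$, the requirements $\mathbf{A}(I_1)=\mathbf{Z}_1(I_1)=0$ are equivalent to $\mathbf{A}(I_1)=\mathbf{X}_1(I_1)=0$, which identifies $I_1$ as a first integral of $\mathbf{A}$ associated to $(\partial_u,\lambda_1)$; the same reasoning covers $I_2$. For functional independence, any smooth relation $F(I_1,I_2)\equiv 0$ with nonzero gradient would give, upon applying $\mathbf{Z}_1$ and $\mathbf{Z}_2$ and using the off-diagonal entries $\mathbf{Z}_1(I_2)=\mathbf{Z}_2(I_1)=1$, both $F_{I_2}=0$ and $F_{I_1}=0$, a contradiction. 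I expect the only step that looks technical is the derivation of (\ref{I1})--(\ref{I2}) via Cramer's rule; the true potential obstacle, namely the compatibility of mixed partial derivatives, is bypassed entirely by invoking Fröbenius rather than computed by hand.
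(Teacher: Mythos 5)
Your proposal is correct and follows essentially the same route as the paper: set up the overdetermined systems (\ref{sistemasnuevos}) for the commuting frame $\{\mathbf{A},\mathbf{Z}_1,\mathbf{Z}_2\}$ of Theorem \ref{teofinal}, invoke Fr\"obenius for compatibility, solve the resulting $3\times 3$ linear system (using $\mathbf{A}(h_i)=\lambda_i h_i$) to obtain the explicit gradients (\ref{I1})--(\ref{I2}), and identify $I_i$ as associated to $(\partial_u,\lambda_i)$ via $\mathbf{Z}_i=h_i\mathbf{X}_i$. The only differences are presentational: you make the Cramer's-rule step and the functional-independence argument explicit, whereas the paper states the former as "it can be checked" and offers the direct verification of equality of mixed partials as an alternative to Fr\"obenius.
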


\begin{example}
\label{eje4}
{\rm This example is a continuation of Examples \ref{eje1}, \ref{eje2} and \ref{eje3}.  Theorem \ref{simetrias} will be  applied to integrate by quadratures equation (\ref{painleve}). The functions $h_1=f_1g_1$ and $h_2=f_2g_2$ obtained in (\ref{heje}) let the construction of systems (\ref{I1})-(\ref{I2}). Two first integrals $I_1$ and $I_2$ for this equation can be obtained  by quadratures by using (\ref{I1}) and (\ref{I2}) respectively: 
\begin{equation}
\label{ipejegeneral}
I_1=\frac{ ({u}^{2}+u_x+1)\,\psi_1 -2u\psi_1'}{ ({u}^{2}+u_x+1)\psi_2 -2u\psi_2'}
\quad \mbox{and} \quad
I_2=\frac{ ({u}^{2}+u_x-1)\,\theta_1 -2u\theta_1'}{ ({u}^{2}+u_x-1)\theta_2 -2u\theta_2'}.
%I_2=\frac {\left({\dfrac {{u_{{}}}^{2}+u_x-1}{2u_{{}}}}\right)\,\theta_1 -
%\theta_1' }{\left({\dfrac {{u_{{}}}^{2}+u_x-1}{2u_{{}}}}\right)\theta_2 -{
%\theta_2'}}.
\end{equation}
These first integrals provide the general solution of equation (\ref{painleve}) in terms of the solutions of the corresponding linear equations (\ref{lineal}):
%\begin{equation}
%u(x)=-{\frac {C_1\,C_2\,\psi_2  \theta_2
% -C_1\,\theta_1  \psi_1  -
%C_2\,\psi_1  \theta_2  +\theta_1
%  \psi_1  }{C_1\,C_2\,\psi_1
%  \theta_2'  -C_1\,C_2\,
%  \psi_1'
%   \theta_2  -C_1\,
%\theta_1' \psi_1
% +C_1\,\theta_1
%\psi_1'  +C_2\,  '
%\psi_1'   \theta_2  -C_2\,
%\psi_1 \theta_2'  +
%\theta_1'
% \psi_1  -\theta_1  \psi_1' }}
%\end{equation}
\begin{equation}\label{generalsol}
u_{{}}(x)=\left( {\frac {C_1\,\psi_2'
  -\psi_1'
}{C_1\,\psi_2  -\psi_1  }}-{\frac {
C_2\,\theta_2'  -\theta_1'  }{C_2\,\theta_2
  -\theta_1  }} \right)^{-1} \quad (C_1,C_2\in \mathbb{R}).
\end{equation}
%We  observe that the general solution (\ref{generalsol}) for the equation (\ref{painleve}) can be expressed in terms of the general solutions of the Riccati-type equations (\ref{reduceje})  \begin{equation}
%w_1={\frac {C_1\,\psi_2'
%  -\psi_1'
%}{C_1\,\psi_2  -\psi_1  }} \quad \mbox{and} \quad w_2={\frac {
%C_2\,\theta_2'  -\theta_1'  }{C_2\,\theta_2
%  -\theta_1  }},
%  \end{equation}
% which are given in terms of the solutions of the
%second-order linear equations (\ref{lineal}) (see page 295 in \cite{ince}).

In \cite{ince} the general solution of equation (\ref{painleve}) is expressed in terms of the solutions of a third-order nonlinear ODE that becomes a fourth-order linear ODE under differentiation. Observe that in this paper  the general solution (\ref{generalsol}) has been obtained in terms of two fundamental sets of solutions for two second-order linear ODEs, although equation (\ref{painleve}) is not linearisable. }
\hfill\qedsymbol
\end{example}

\section{Integrating factors and Jacobi last multipliers  without additional integration}\label{secJacobi}

In this section, it is shown  how several classical objects (as Jacobi last multipliers and integrating factors) associated to the given ODE, can be considered as by-products of the procedure of integration by quadratures described in the previous section.  

\subsection{Integrating factors and a Jacobi last multiplier for equation (\ref{orden2})}

%\subsection{Integrating factors and Jacobi last multpliers of equation (\ref{orden2})}

Some relations between first integrals of $2$nd-order ODEs and $\mathcal{C}^\infty-$sy\-mme\-tries have been established in \cite{ muriel2009,muriel09wascom,muriel08}: if $I\in \mathcal{C}(M^{(1)})$ is a first integral of $\campoA$ associated to a $\mathcal{C}^\infty-$symmetry whose canonical representative is
$(\partial_u,\lambda),$ then $\mu=I_{u_x}$ is an integrating factor of equation (\ref{orden2}) such that $\mu(u_{xx}-\phi)=\mathbf{D}_x(I)$ and the following identities hold:
\begin{equation}\label{Iuna}
I_x=\mu(\lambda u_x-\phi),\quad I_u=-\lambda\mu.
\end{equation} 

Suppose that  two non-equivalent $\mathcal{C}^\infty-$symmetries $(\partial_u,\lambda_1)$   and $(\partial_u,\lambda_2)$ of  equation (\ref{orden2}) are known, and let  $f_1,f_2$ (resp. $g_1,g_2$) be some
pairs of functions in $\mathcal{C}^{\infty}(M^{(1)})$ satisfying (\ref{f1f2}) (resp. (\ref{sistemas})). By (\ref{I1})-(\ref{I2}), \begin{equation}\label{factint}
\mu_1=\dfrac{1}{f_2g_2(\lambda_2-\lambda_1)}, \qquad \mu_2=\dfrac{1}{f_1g_1(\lambda_1-\lambda_2)}
\end{equation}
are two integrating factors of (\ref{orden2}). Observe that the two first equations in (\ref{I1}) (resp. (\ref{I2})) are the
equations (\ref{Iuna}) for $\mu=\mu_1$ (resp. $\mu=\mu_2$)  given in (\ref{factint}).

%the associated first integrals of $\campoA$ can be found by integrating
%the corresponding systems (\ref{Iuna}) on their own, provided the corresponding integrating factors are known (see Section 5 in \cite{muriel2009} for details). In this
%paper (see equations (\ref{I1})-(\ref{I2})) we have managed to use simultaneously two generalized $\mathcal{C}^{\infty}-$sy\-mme\-tries to provide two integrating factors
%\begin{equation}\label{factint}
%\mu_1=\dfrac{1}{f_2g_2(\lambda_2-\lambda_1)}, \qquad \mu_2=\dfrac{1}{f_1g_1(\lambda_1-\lambda_2)}
%\end{equation}
%and to find the associated first integrals of the equation by quadrature. Let us also observe that the two first equations in (\ref{I1}) (resp. in \ref{I2}) are the
%equations (\ref{Iuna})for $\mu=\mu_1$ (resp. $\mu=\mu_2$)  given in (\ref{factint}).

%\subsection{Jacobi last multiplier  of equation (\ref{orden2})}

Another consequence of Theorem \ref{teofinal} is that  the system of commuting symmetries can be used  to derive an explicit expression of a Jacobi last multiplier of the
equation. According to \cite{whittaker1988treatise} and with the same notation used in the previous sections, the reciprocal of the determinant
$$\left|
\begin{array}{ccc}
0 & f_1{g_1} & \A(f_1{g_1})\\
0 & f_2{g_2} & \A(f_2{g_2})\\
1&u_{x}&\phi
\end{array}\right|=\left|
\begin{array}{ccc}
0 & f_1{g_1} & {\lambda_1}f_1{g_1}\\
0 &f_2 {g_2} & {\lambda_2}f_2{g_2}\\
1&u_{x}&\phi
\end{array}\right|=f_1{g_1}f_2{g_2}({\lambda_2}-{\lambda_1})$$ is a Jacobi last multiplier of  equation (\ref{orden2}). The   earlier  discussion proves the next theorem:
\begin{thm}\label{ifJM}
{\rm Let $(\partial_u,\lambda_1)$ and $(\partial_u,\lambda_2)$ be the canonical representatives of two non-equivalent generalized $\mathcal{C}^{\infty}-$sy\-mme\-tries of equation (\ref{orden2}). Let $f_1,f_2$ (resp. $g_1,g_2$) be some pairs of functions in
$\mathcal{C}^{\infty}(M^{(1)})$ satisfying (\ref{f1f2}) (resp. (\ref{sistemas})). Then:
\begin{enumerate}
\item The function $\mu_{1}=\mu_{1}(x,u,u_{x})$ (resp. $\mu_{2}=\mu_{2}(x,u,u_{x})$) given in (\ref{factint}) is an integrating factor of the equation (\ref{orden2}). A first integral $I_1$ (resp. $I_2$) of $\campoA$ associated to $(\partial_u,\lambda_1)$ (resp.  $(\partial_u,\lambda_2)$) can be obtained by quadratures from   (\ref{I1}) (resp. (\ref{I2})).
\item The function $M=M(x,u,u_{x})$ defined by
\begin{equation}\label{JLM}
M=\dfrac{1}{ f_1{g_1}f_2{g_2}({\lambda_2}-{\lambda_1}) }
\end{equation} is a Jacobi last multiplier of the equation (\ref{orden2}).\end{enumerate}}
\end{thm}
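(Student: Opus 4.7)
The plan is to establish both parts of the theorem by invoking the structural results already proved in Theorems \ref{teofinal} and \ref{simetrias}, so that what remains reduces to short algebraic verifications.

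For part 1, I would first invoke Theorem \ref{simetrias} to produce, by quadratures, two functionally independent first integrals $I_1,I_2$ of $\mathbf{A}$ from the systems (\ref{I1}) and (\ref{I2}). To see that $\mu_1$ and $\mu_2$ are integrating factors of (\ref{orden2}), I would read off the third equations in (\ref{I1}) and (\ref{I2}) to recognize $(I_1)_{u_x}=\mu_1$ and $(I_2)_{u_x}=\mu_2$. Since $\mathbf{A}(I_i)=0$, one has $(I_i)_x+u_x(I_i)_u+\phi\,(I_i)_{u_x}=0$, and therefore
\begin{equation*}
\mathbf{D}_x(I_i)=(I_i)_x+u_x(I_i)_u+u_{xx}(I_i)_{u_x}=(I_i)_{u_x}(u_{xx}-\phi)=\mu_i(u_{xx}-\phi),
\end{equation*}
which is precisely the integrating factor identity for (\ref{orden2}). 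The first two equations in (\ref{I1}) (resp.\ (\ref{I2})) then reproduce (\ref{Iuna}) for $\mu=\mu_1$ (resp.\ $\mu=\mu_2$), confirming the identification.

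For part 2, the strategy is the classical Jacobi last multiplier formula: given a vector field together with two symmetries that commute with it and with one another, the reciprocal of the determinant built from their coefficients is a Jacobi last multiplier. Theorem \ref{teofinal} supplies exactly the required commuting symmetries $\mathbf{Z}_i=h_i\partial_u+\mathbf{A}(h_i)\partial_{u_x}$ with $h_i=f_ig_i$, and identity (\ref{cuentas}) gives $\mathbf{A}(h_i)=\lambda_i h_i$. Expanding the $3\times 3$ determinant displayed just before the theorem along its first column and using this relation yields
\begin{equation*}
f_1g_1\,\mathbf{A}(f_2g_2)-f_2g_2\,\mathbf{A}(f_1g_1)=f_1g_1f_2g_2(\lambda_2-\lambda_1),
\end{equation*}
whose reciprocal is precisely the function $M$ of (\ref{JLM}).

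The only subtle point I would watch is confirming that the classical Jacobi last multiplier formula still applies when $\mathbf{Z}_1,\mathbf{Z}_2$ are \emph{generalized} symmetries with $u_x$-dependent coefficients rather than point symmetries. The underlying derivation, however, only uses $[\mathbf{Z}_1,\mathbf{A}]=[\mathbf{Z}_2,\mathbf{A}]=[\mathbf{Z}_1,\mathbf{Z}_2]=0$, and all three commutators are guaranteed by Theorem \ref{teofinal}. So the proof is essentially bookkeeping on earlier work: part 1 amounts to reading off one partial derivative in (\ref{I1})--(\ref{I2}), while part 2 is a one-line determinant expansion using (\ref{cuentas}).
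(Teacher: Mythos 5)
Your proposal is correct and follows essentially the same route as the paper: part 1 identifies $\mu_i$ with $(I_i)_{u_x}$ from the systems (\ref{I1})--(\ref{I2}) (the paper cites the relations (\ref{Iuna}) from earlier work where you derive $\mathbf{D}_x(I_i)=\mu_i(u_{xx}-\phi)$ directly from $\mathbf{A}(I_i)=0$, a harmless and correct substitute), and part 2 is exactly the paper's argument via the Whittaker determinant built from $\mathbf{A}$ and the commuting generalized symmetries $\mathbf{Z}_1,\mathbf{Z}_2$ of Theorem \ref{teofinal}, using $\mathbf{A}(f_ig_i)=\lambda_i f_ig_i$. Your closing remark about the formula's validity for generalized (rather than point) symmetries is a reasonable observation that the paper passes over silently.
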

%In the next theorem we deduce the explicit expression of the Jacobi last multiplier (\ref{JLM}) in terms of the two first integrals $I_1$ and $I_2,$ derived in Theorem
%\ref{simetrias}.
%\begin{thm}
%{\rm Let $(\partial_u,\lambda_1)$ and $(\partial_u,\lambda_2)$ be the canonical representatives of two non-equivalent generalized $\mathcal{C}^{\infty}-$sy\-mme\-tries of equation (\ref{orden2}). Let $I_1$ and $I_2$ be the two first integrals of $\campoA$
%defined in (\ref{I1})-(\ref{I2}). Then the function $M=M(x,u,u_{x})$ defined by
%\begin{equation}\label{otroJLM}
%M=\dfrac{\Yu(I_2)\Yd(I_1)} {{\lambda_2}-{\lambda_1} }
%\end{equation} is a Jacobi last multiplier of the equation (\ref{orden2}).}\end{thm}
%\begin{proof}
%{\rm By (\ref{exacta1}), we can write
%\begin{equation}
%{g_2}{\mathbf{Y}_2}({I_1})= {\boldsymbol{\omega}_1} ({g_2}{\mathbf{Y}_2})= \dfrac{{\A} \intprod {g_1}{\Zu}\intprod {\boldsymbol{\Omega}}}{ {\mathbf{A}}\intprod {g_1}{\Zu}\intprod
%{g_2}{\mathbf{Y}_2}\intprod {\boldsymbol{\Omega}}}({g_2}{\mathbf{Y}_2})=1.
%\end{equation} Analogously, $g_1\Zu(I_2)=1.$ Since $\Zu=f_1\Yu$ and $\Zd=f_2\Yd,$ we finally get $$\Yd(I_1)=\frac{1}{f_2g_2},\quad \Yu(I_2)=\frac{1}{f_1g_1}.$$ Therefore, the Jacobi last multiplier (\ref{JLM}) can also be written as in (\ref{otroJLM}).
%}
%\end{proof}

\begin{example}
\label{eje5} {\rm By continuing  the Examples \ref{eje1}$-$\ref{eje4}, Theorem \ref{ifJM} implies that two integrating factors of equation (\ref{painleve}) can be calculated   as in (\ref{factint}) by using the functions given in (\ref{lambdasymeje}), (\ref{f1f2ej}) and (\ref{gtilde}):  
\begin{equation}
\label{fiejegeneral}
\mu_1=\frac {-2uW_1}{ \left( 
 \left(u^{2}+u_x+1\right)\psi_2-2u\psi_2'
 \right) ^{2}}
 ,\qquad
 \mu_2=\frac {-2uW_2}{ \left( 
  \left(u^{2}+u_x-1\right)\theta_2-2u\theta_2'
  \right) ^{2}}
\end{equation} expressed in terms of the solutions of the corresponding linear equations (\ref{lineal}).  A Jacobi last multiplier of the equation (\ref{painleve}) can be obtained by using (\ref{JLM}):

%\begin{equation}
%\label{JLMejgeneral}
$$M=\dfrac{8uW_1W_2}{\left( 
 \left(u^{2}+u_x+1\right)\psi_2-2u\psi_2'
 \right) ^{2}   \left( 
   \left(u^{2}+u_x-1\right)\theta_2-2u\theta_2'
   \right) ^{2} } $$
%\end{equation}

%\begin{equation}
%\label{JLMejgeneral}
%M=\dfrac{({\theta_2 \theta_1'-  \theta_2'
% \theta_1 
%})({\psi_2 \psi_1' -  \psi_2' 
% \psi_1 
%})}{2u^3\left( \psi_2 
% \left(-{\dfrac {{u_{{}}}^{2}+u_x+1}{2u_{{}}}}\right)+\psi_2'
% \right) ^{2}\left( \theta_2
% \left({\dfrac {{u_{{}}}^{2}+u_x-1}{2u_{{}}}}\right)-\theta_2'  \right) ^{2}}.
%\end{equation}

\hfill\qedsymbol} 
\end{example}

\subsection{Integrating factors of the reduced  equations associated to the $\mathcal{C}^{\infty}-$sy\-mme\-tries}\label{s7}

 Theorem \ref{simetrias}  provides a novel procedure that simultaneously uses two known generalized $\mathcal{C}^{\infty}-$sy\-mme\-tries of (\ref{orden2}) to construct, {by quadratures}, two independent first integrals, $I_1$ and $I_2$ of $\campoA.$  
 In this section it is shown  how any
pair of functions $g_1,g_2$ in $\mathcal{C}^{\infty}(M^{(1)})$   satisfying (\ref{sistemas}) can be used to provide integrating factors of the reduced  equations associated to the given  generalized $\mathcal{C}^{\infty}-$sy\-mme\-tries.

Consider the system of coordinates $\{x,w_1,w_2\}$ constructed in Proposition \ref{rectificacion1} and the reduced equations (\ref{reducidas}) associated to the given $\mathcal{C}^{\infty}-$symmetries.  Recall that a function $\mu=\mu(x,u)$ is an integrating factor of a first-order ODE $u_x=\varphi(x,u)$ if $\mu_x+(\varphi\mu)_u=0$ \cite{blumanlibro, blumanancoarticulo}; in other words, $\mu$ is an integrating factor of that equation  if \begin{equation}
\label{fiorden1} \campoA_0(\mu)=-\mu\, \varphi_{u},
\end{equation} where $\campoA_0=\partial_x+\varphi\partial_u$ denotes the corresponding vector field.
According to (\ref{pdered}) and taking (\ref{camposreducidas}) into account, %the vector field (\ref{camposreducidas}) associated to the reduced equation (\ref{reducidas}) verifies
 $\campoA_1({g}_2)={g}_2({{\phi}_1})_{w_1}$ (resp. $\campoA_2({g}_1) ={g}_1({{\phi}_2})_{w_2}$). Therefore 
$$\campoA_1\left(\frac{1}{{g}_2}\right)
=-\frac{{g}_2\,({{\phi}_1})_{w_1}}{ {({g}_2)}^{2}}=-\frac{({{\phi}_1})_{w_1}}{{g}_2} \quad \mbox{and} \quad
\campoA_2\left(\frac{1}{{g}_1}\right)=-\frac{({{\phi}_2})_{w_2}}{g_1}.$$ This implies, by using (\ref{fiorden1}), that the reciprocal of the functions $g_1$ and $g_2$  provide integrating factors of the reduced equations (\ref{reducidas}):

\begin{thm}\label{teofiredu}
{\rm Let $(\partial_u,\lambda_1)$ and $(\partial_u,\lambda_2)$ be the canonical representatives of two non-equivalent generalized $\mathcal{C}^{\infty}-$sy\-mme\-tries of equation (\ref{orden2}). Let $\{x,w_1,w_2\}$ be the local system of coordinates of
$M^{(1)}$ given in Proposition 
\ref{rectificacion1} and let ${g}_1={g}_1(x,w_2)$ and ${g}_2={g}_2(x,w_1)$ be  particular solutions of the corresponding equation in
(\ref{pdered}). Then 
\begin{equation} \label{ifredu}
{\nu}_1(x,w_1)=\frac{1}{{g}_2(x,w_1)} \quad \mbox{and} \quad%\left(
\,
{\nu}_2(x,w_2)=\frac{1}{{g}_1(x,w_2)} 
\end{equation} are respectively  integrating factors of the two reduced equations  that appear in (\ref{reducidas}).
% associated to the $\mathcal{C}^{\infty}-$sy\-mme\-tries $(\partial_u,\lambda_1)$ and $(\partial_u,\lambda_2)$, respectively.
}
\end{thm}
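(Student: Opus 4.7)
The plan is to verify directly that the functions $\nu_1 = 1/g_2(x,w_1)$ and $\nu_2 = 1/g_1(x,w_2)$ satisfy the integrating factor criterion (\ref{fiorden1}) applied to the first-order reduced equations in (\ref{reducidas}). Since these reduced equations have associated vector fields $\mathbf{A}_1 = \partial_x + \phi_1(x,w_1)\partial_{w_1}$ and $\mathbf{A}_2 = \partial_x + \phi_2(x,w_2)\partial_{w_2}$ as recorded in (\ref{camposreducidas}), the required conditions are $\mathbf{A}_1(\nu_1) = -\nu_1\,(\phi_1)_{w_1}$ and $\mathbf{A}_2(\nu_2) = -\nu_2\,(\phi_2)_{w_2}$.

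The first step is to rewrite the partial differential equations in (\ref{pdered}) in terms of these vector fields. Because, by construction in Lemma \ref{lema8}, $g_2 = g_2(x,w_1)$ has no $w_2$ dependence, the left-hand side $(g_2)_x + (g_2)_{w_1}\phi_1$ coincides with $\mathbf{A}_1(g_2)$, so the equation for $g_2$ becomes $\mathbf{A}_1(g_2) = g_2\,(\phi_1)_{w_1}$. The analogous identity $\mathbf{A}_2(g_1) = g_1\,(\phi_2)_{w_2}$ follows in the same way. A one-line application of the derivation property of $\mathbf{A}_1$ then gives
\begin{equation*}
\mathbf{A}_1\!\left(\tfrac{1}{g_2}\right) = -\frac{\mathbf{A}_1(g_2)}{g_2^{\,2}} = -\frac{(\phi_1)_{w_1}}{g_2} = -\nu_1\,(\phi_1)_{w_1},
\end{equation*}
which is precisely the integrating factor condition (\ref{fiorden1}) for the first reduced equation. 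The parallel computation works for $\nu_2$.

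There is essentially no obstacle: the proof amounts to recognizing that the PDEs (\ref{pdered}) are the statements $\mathbf{A}_i(g_j) = g_j\,(\phi_i)_{w_i}$ (with $\{i,j\}=\{1,2\}$) and that passing to reciprocals converts these into the defining relations for integrating factors. Most of the computation is in fact already displayed in the paragraph preceding the theorem; the proof need only invoke (\ref{fiorden1}) to conclude. The one hypothesis that must be used carefully is the separated functional dependence $g_1 = g_1(x,w_2)$, $g_2 = g_2(x,w_1)$ established in Lemma \ref{lema8}, without which the left-hand sides of (\ref{pdered}) would not collapse to the correct reduced-equation vector fields.
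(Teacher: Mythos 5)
Your proof is correct and follows essentially the same route as the paper: the paragraph preceding the theorem already records $\campoA_1(g_2)=g_2(\phi_1)_{w_1}$ and $\campoA_2(g_1)=g_1(\phi_2)_{w_2}$ from (\ref{pdered}) and (\ref{camposreducidas}), passes to reciprocals, and invokes the integrating factor criterion (\ref{fiorden1}). Your explicit remark that the separated dependence $g_1=g_1(x,w_2)$, $g_2=g_2(x,w_1)$ is what makes the left-hand sides of (\ref{pdered}) collapse to $\campoA_1$ and $\campoA_2$ is a point the paper leaves implicit, but it is the same argument.
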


\subsection{Integrating factors of the  auxiliary equations associated to the $\mathcal{C}^{\infty}-$sy\-mme\-tries}\label{s7bis}

Suppose that   $\widehat{I_1}=\widehat{I_1}(x,w_1)$ and $\widehat{I_2}=\widehat{I_2}(x,w_2)$
denote the respective first integrals of the reduced equations (\ref{reducidas}) associated to the integrating factors (\ref{ifredu}) .
Then \begin{equation} \label{solred}
\widehat{I_i}(x,w_i)=C_i, \quad (i=1,2) %\widehat{I_2}(x,w_2)=C_2
\end{equation} provide  the general solutions
of the respective reduced equations in (\ref{reducidas}), where $C_1, C_2\in\mathbb{R}.$ The corresponding auxiliary equations are obtained by writing $w_i$ %(resp. $w_2$)
in terms of $(x,u,u_x)$ in (\ref{solred}):
\begin{equation} \label{auxiliaresimplicitas}
\widehat{I_i}\bigl(x,w_i(x,u,u_x)\bigr)=C_i, \quad (i=1,2).% \widehat{I_2}(x,w_2(x,u,u_x))=C_2,
\end{equation} %where $C_1,C_2\in\mathbb{R}.$
Locally (\ref{auxiliaresimplicitas}) can be written in the form
\begin{equation}\label{auxiliary}
u_x=H_i(x,u,C_i), \quad (i=1,2). %,\qquad u_x=H_2(x,u,C_2).
\end{equation} 
%Our next objective is to construct integrating factors of these auxiliary equations.

Now it is shown that some of the previous results can be used to obtain integrating factors of the equations in (\ref{auxiliary}). To this end, recall that according to Corollary 6 in \cite{muriel2014Jacobi}, any given Jacobi last multiplier of (\ref{orden2}) provides an integrating factor of the auxiliary equation that
appears in the reduction process associated to a given $\lambda-$symmetry. In the following theorem  the Jacobi last multiplier (\ref{JLM}) is used to find integrating
factors of the auxiliary equations (\ref{auxiliary}).
\begin{thm}\label{teofiaux}
{\rm Let $(\partial_u,\lambda_1)$ and $(\partial_u,\lambda_2)$ be the canonical representatives of two non-equivalent generalized $\mathcal{C}^{\infty}-$sy\-mme\-tries of equation (\ref{orden2}). Let $f_1,f_2$ (resp. $g_1,g_2$) be some  functions in
$\mathcal{C}^{\infty}(M^{(1)})$ satisfying (\ref{f1f2}) (resp. (\ref{sistemas})).
%Then the respective restrictions of the functions
%\begin{equation}
%\label{ifauxiliar}
%\widetilde{\nu}_1=\frac{1}{f_1\,g_1} \quad \mbox{ and } \quad %\left(\mbox{resp. }
%\widetilde{\nu}_2=\frac{1}{f_2\,g_2} %\right)
%\end{equation} to %(\ref{auxiliary})
%\begin{equation}
%\label{auxiliares}
%\begin{array}{r}
%\Delta_1=\{(x,u,u_x)\in M^{(1)}:u_x=H_1(x,u,C_1)\} \\
%\quad
%%\left(\mbox{resp. }
% \mbox{ and } \quad \Delta_2=\{(x,u,u_x)\in M^{(1)}:u_x=H_2(x,u,C_2)\} %\right)
%\end{array}
%\end{equation} are integrating factors of the auxiliary equations (\ref{auxiliary}), respectively.
Then the function %for $i=1,2,$ the function
\begin{equation}
\label{ifauxiliar}
\widetilde{\nu}_i(x,u)=\frac{1}{f_i\bigl(x,u,H_i(x,u,C_i)\bigr)\,g_i\bigl(x,u,H_i(x,u,C_i)\bigr)}, \qquad (i=1,2) %\mbox{ and } \quad %\left(\mbox{resp. }
%\widetilde{\nu}_2=\frac{1}{f_2\,g_2} %\right)
\end{equation} %(\ref{auxiliary})
is an integrating factor of the corresponding auxiliary equation (\ref{auxiliary}). %$u_x=H_i(x,u,C_i),$ for $i=1,2.$
}
\end{thm}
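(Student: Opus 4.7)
The plan is to verify directly the standard integrating-factor condition $(\widetilde{\nu}_i)_x + (H_i\widetilde{\nu}_i)_u = 0$ for the first-order ODE (\ref{auxiliary}), by exploiting two identities already produced in the excerpt: the multiplicative property $\mathbf{A}(f_ig_i) = \lambda_i(f_ig_i)$ from the chain (\ref{cuentas}), and the fact that $w_i$ is an invariant of $\mathbf{X}_i = \partial_u + \lambda_i\partial_{u_x}$, which was used in Proposition \ref{rectificacion1}. Alternatively, one could appeal to Corollary 6 of \cite{muriel2014Jacobi} applied to the Jacobi last multiplier (\ref{JLM}), since $M \cdot f_jg_j(\lambda_2 - \lambda_1) = 1/(f_ig_i)$ for $\{i,j\}=\{1,2\}$, and the restriction to the auxiliary equation is exactly $\widetilde{\nu}_i$; but the direct route seems cleaner.

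First, I would rewrite the integrating-factor condition in the form $\mathbf{B}_i(\widetilde{\nu}_i) = -(H_i)_u\,\widetilde{\nu}_i$, where $\mathbf{B}_i = \partial_x + H_i\partial_u$ is the vector field associated to the auxiliary equation. The key bridging step is the following substitution identity: for any $F = F(x,u,u_x)$, writing $\widetilde{F}(x,u) = F(x,u,H_i(x,u,C_i))$ and using that $(H_i)_x + H_i(H_i)_u = \phi(x,u,H_i)$ on the integral submanifold defined by the first integral $I_i$, one obtains
\begin{equation*}
\mathbf{B}_i(\widetilde{F}) = \mathbf{A}(F)\bigl|_{u_x = H_i}.
\end{equation*}
This reduces the verification on $(x,u)$-space to a computation on $M^{(1)}$.

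Next, I would compute $\mathbf{A}(1/(f_ig_i))$. From (\ref{cuentas}) the function $h_i = f_ig_i$ satisfies $\mathbf{A}(h_i) = \lambda_i h_i$, so
\begin{equation*}
\mathbf{A}\!\left(\frac{1}{f_ig_i}\right) = -\frac{\lambda_i}{f_ig_i},
\end{equation*}
and the substitution identity yields $\mathbf{B}_i(\widetilde{\nu}_i) = -\lambda_i(x,u,H_i)\,\widetilde{\nu}_i$. It then remains to identify $\lambda_i(x,u,H_i)$ with $(H_i)_u$. Since $H_i$ is defined implicitly by $\widehat{I}_i(x,w_i(x,u,u_x)) = C_i$ (equivalently $w_i(x,u,H_i) = W_i(x,C_i)$), differentiating with respect to $u$ yields $(H_i)_u = -(w_i)_u/(w_i)_{u_x}$; but $\mathbf{X}_i(w_i) = (w_i)_u + \lambda_i(w_i)_{u_x} = 0$, so $-(w_i)_u/(w_i)_{u_x} = \lambda_i$. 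Evaluating at $u_x = H_i$ gives exactly $(H_i)_u = \lambda_i(x,u,H_i)$, closing the argument.

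The main obstacle I anticipate is stating the substitution identity $\mathbf{B}_i(\widetilde{F}) = \mathbf{A}(F)|_{u_x=H_i}$ cleanly, since it requires observing that the auxiliary equation surface $\{u_x = H_i(x,u,C_i)\}$ is $\mathbf{A}$-invariant (as it is a level set of the first integral $I_i$). Once this is in place, both steps (the multiplicative identity $\mathbf{A}(f_ig_i) = \lambda_i f_ig_i$ and the invariance $\mathbf{X}_i(w_i) = 0$) are immediate consequences of results already established, so the remainder of the proof is essentially bookkeeping.
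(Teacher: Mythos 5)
Your argument is correct, but it follows a genuinely different route from the paper's. The paper proves Theorem \ref{teofiaux} by invoking Corollary 6 of \cite{muriel2014Jacobi}: it restricts the quotient $M/(I_1)_{u_x}$ of the Jacobi last multiplier (\ref{JLM}) by $(I_1)_{u_x}$ to the surface $u_x=H_1(x,u,C_1)$, computes $(I_1)_{u_x}=-1/(f_2g_2(\lambda_1-\lambda_2))$ from (\ref{m1}) and (\ref{I1m1}), and concludes that $M/(I_1)_{u_x}=1/(f_1g_1)$ restricts to $\widetilde{\nu}_1$ (and similarly for $\widetilde{\nu}_2$ using $-M$). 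You instead verify the integrating-factor condition (\ref{fiorden1}) directly: the substitution identity $\mathbf{B}_i(\widetilde{F})=\mathbf{A}(F)\bigl|_{u_x=H_i}$ is legitimate because the graph $u_x=H_i$ is a level set of the first integral $\widehat{I_i}(x,w_i)$ and hence $\mathbf{A}$-invariant, the eigenfunction relation $\mathbf{A}(f_ig_i)=\lambda_i f_ig_i$ is exactly (\ref{cuentas}), and the identification $(H_i)_u=\lambda_i(x,u,H_i)$ follows from $\mathbf{X}_i(w_i)=0$ (equivalently from (\ref{m1})--(\ref{m2})), all of which are established earlier in the paper. The paper's route is shorter given the external corollary and keeps the Jacobi-last-multiplier theme of the section in the foreground; your route is self-contained (it does not need \cite{muriel2014Jacobi} at all) and yields as a by-product the clean geometric identity that the restriction of $\lambda_i$ to the auxiliary surface equals $(H_i)_u$, which makes transparent \emph{why} $1/(f_ig_i)$ is the right function. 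Both are valid proofs of the statement.
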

\begin{proof}
A function  $g_1$ (resp. $g_2$) satisfying (\ref{sistemas}), written in terms of  $\{x,w_2\}$  (resp. $\{x,w_1\}$),  is a solution of the first (resp. the second) equation in (\ref{pdered}). By Theorem
\ref{teofiredu} the corresponding functions (\ref{ifredu}) are integrating factors of the respective equations in (\ref{reducidas}). The respective first integrals    $\widehat{I_1}=\widehat{I_1}(x,w_1)$ and $\widehat{I_2}=\widehat{I_2}(x,w_2),$
satisfy
\begin{equation}\label{I1m1}
(\widehat{I_1})_{w_1}=\frac{1}{{g}_2(x,w_1)}, \qquad %(\mbox{resp. } (\widehat{I_2})_{w_2}=\nu_2).
(\widehat{I_2})_{w_2}=\frac{1}{{g}_1(x,w_2)}.
\end{equation}
For $i=1,2,$ let $I_i=I_i(x,u,u_x)$ %(resp. $I_2=I_2(x,u,u_x)$)
denote the function $\widehat{I_i}$ %(resp. $\widehat{I_2}$)
when $w_i$ %(resp. $w_2$)
is expressed in terms of $(x,u,u_x).$ %(see Remark \ref{notacion}).
Let $M$ be the Jacobi last multiplier of equation (\ref{orden2}) given in (\ref{JLM}). According to Corollary 6 in \cite{muriel2014Jacobi}, the restriction of the function %$\Delta_i$ %(resp. $\Delta_2$)
\begin{equation}\label{inter}
\frac{M}{({I_1})_{u_x}}=\frac{1}{ f_1{g_1}f_2{g_2}({\lambda_2}-{\lambda_1}) ({I_1})_{u_x}} %\quad (i=1,2) %\left(\mbox{resp. } \nu_2=\frac{M}{({I_2})_{u_x}}\right)
\end{equation} to
%\begin{equation}
%\label{auxiliares}
%\begin{array}{r}
$\Delta_1=\{(x,u,u_x)\in M^{(1)}:u_x=H_1(x,u,C_1)\} $
%\quad
%\left(\mbox{resp. }
% \mbox{ and } \quad \Delta_2=\{(x,u,u_x)\in M^{(1)}:u_x=H_2(x,u,C_2)\} %\right)
%\end{array}
%\end{equation}
is an integrating function of the auxiliary equation $u_x=H_1(x,u,C_1).$ % for $i=1,2.$ %$u_x=H_1(x,u,C_1)$ (resp. of ${u_x=H_2(x,u,C_2)}$).
By (\ref{m1}) and by taking (\ref{I1m1}) into account, 
\begin{equation}\label{I1ux}
({I_1})_{u_x}=-\frac{1}{f_2g_2(\lambda_1-\lambda_2)}
% \qquad \left(\mbox{resp. } ({I_2})_{u_x}=-\frac{1}{f_1(\lambda_2-\lambda_1)g_1}\right)
\end{equation} and (\ref{inter}) becomes

\begin{equation}\label{inter2}
\begin{array}{lll}
\dfrac{M}{({I_1})_{u_x}} =\dfrac{1}{f_1g_1}. 
\end{array}
\end{equation}
%which proves the result.
Therefore the restriction of (\ref{inter2}) to $\Delta_1$ gives us the integrating factor $\widetilde{\nu}_1$ given in (\ref{ifauxiliar}). A similar reasoning by
using the Jacobi last multiplier $-M,$ where $M$ is defined in (\ref{JLM}), proves that the function $\widetilde{\nu}_2$ given in (\ref{ifauxiliar}) is an integrating
factor of the auxiliary equation $u_x=H_2(x,u,C_2),C_2\in \mathbb{R}.$
\end{proof}

% we show the role of the functions
%$f_1,f_2,g_1$ and $g_2$ to obtain the first integrals of the equation as primitives of the differential forms (\ref{exacta1})-(\ref{exacta2}) or from the reduced and
%auxiliary equations associated to each $\mathcal{C}^{\infty}-$symmetry. 
%
%The diagram  

\begin{example}\label{eje6}
{\rm  As a continuation of Example \ref{eje5}, by Theorem \ref{teofiredu} the reciprocal of the functions (\ref{gtilde}), i.e. the functions 
%\begin{equation}
% \label{ifriccati}
 $${\nu}_1(x,w_1)=\dfrac{W_1}{\left( \psi_2 w_1+\psi_2'  \right) ^{2}
   }\quad \mbox{and} \quad
{\nu}_2(x,w_2)=\dfrac
  {W_2 
  }{ \left( \theta_2 w_2-\theta_2'  \right) ^{2}},
 $$
 %\end{equation} 
are integrating factors of the reduced equations (\ref{reduceje}). The associated first integrals become
% \begin{equation}
% \label{ipriccati}
$$ \widehat{I}_1(x,w_1)=\frac { w_1\,\psi_1 +
 \psi_1' }{ w_1\psi_2 + 
 \psi_2' }
 \quad \mbox{and} \quad
 \widehat{I}_2(x,w_2)=\frac {w_2\,\theta_1 -
 \theta_1' }{w_2\theta_2 -{ 
 \theta_2'}}.
$$
% \end{equation} 
 By using (\ref{wejemplo}), the auxiliary equations (\ref{auxiliary}) become :
 \begin{equation}
 \label{auxiliaryejemplo}
 \begin{array}{l}
 u_x=2u\left({\dfrac {C_1\,\psi_2'
    -\psi_1'
  }{C_1\,\psi_2  -\psi_1  }}\right)-u^2-1\quad \mbox{and} \quad
  
 u_x= 2u\left({\dfrac {
  C_2\,\theta_2'  -\theta_1'  }{C_2\,\theta_2
    -\theta_1  }}\right)-u^2+1.
 \end{array}
 \end{equation}
 By Theorem \ref{teofiaux}, the functions (\ref{ifauxiliar}) constructed by using (\ref{f1f2ej}) and (\ref{gtilde}):
%  \begin{equation}
% \label{fiauxeje}
$$ \tilde{\nu}_1(x,u)=-\dfrac{C_1\,\psi_2  -\psi_1 }{u^2({C_1\,\psi_2'
    -\psi_1'
  })} \quad \mbox{and} \quad  
  \tilde{\nu}_2(x,u)=\dfrac{C_2\,\theta_2
    -\theta_1 }{u^2(C_2\,\theta_2'  -\theta_1')} 
$$
% \end{equation} 
 are integrating factors of the respective auxiliary equation in (\ref{auxiliaryejemplo}). The corresponding first integrals are the functions $I_1$ and $I_2$ derived in (\ref{fiejegeneral}).
 \hfill \qedsymbol }
\end{example}

\subsection{Scheme of the methods}\label{secesquema}

 Figure \ref{diagrama} presents a scheme of the alternative procedures that can be followed to integrate a second-order equation admitting two generalized 
$\mathcal{C}^{\infty}-$sym\-me\-tries.  Their canonical representatives $(\partial_u,\lambda_1)$ and $(\partial_u,\lambda_2)$ will be used in the scheme.  

\begin{enumerate}
\item In the central branches of the diagram,  it is  sketched the use of Theorem \ref{simetrias} to calculate two independent first integrals $I_1,I_2$ by {quadratures} through systems (\ref{I1}) and (\ref{I2}). The associated integrating factors are given in (\ref{factint}). 

\item The left branch of the figure solves the ODE by using the reduced and auxiliary equations associated to the first $\mathcal{C}^{\infty}-$symmetry. The reduced equation can be solved by {quadrature} by using the integrating factor ${\nu}_1$ in Eq. (\ref{ifredu})   (Theorem \ref{teofiredu}). The auxiliary equation can be solved by {quadrature} by using the integrating factor $\widetilde{\nu}_1$ given in (\ref{ifauxiliar}) (Theorem \ref{teofiaux}). A similar process can be followed by using the second  $\mathcal{C}^{\infty}-$symmetry and the corresponding functions ${\nu}_2$ and $\widetilde{\nu}_2$ (right branch of the scheme).
\item Finally, the functions  ${\nu}_1$ and ${\nu}_2$ given in (\ref{ifredu}) let the determination by {quadrature} of a first integral for each  reduced equation  (Theorem \ref{teofiaux}). The general solution of the ODE arises by following the dashed arrows in the diagram. 
\end{enumerate}

%\begin{sidewaysfigure}[h] %\label{diagrama}

\begin{figure}[H]
\tikzstyle{decision} = [diamond, draw, fill=blue!20, text width=4.5em, text badly centered, node distance=2.5cm, inner sep=0pt] \tikzstyle{block} = [rectangle, draw,
fill=blue!20, text centered, rounded corners] \tikzstyle{line} = [draw, thick, color=black, -latex'] \tikzstyle{join} = [draw, very thick, color=black,
length=1.5cm,-latex'] \tikzstyle{cloud} = [draw, ellipse,fill=red!20, node distance=1.5cm, minimum height=2em] \tikzstyle{myarrow}=[->, >=triangle 90, thick]
\begin{tikzpicture}[xscale=0.75, yscale=0.85, transform shape, node distance = 1.5cm, auto]
% Place nodes
\node at (0,0)[block, rectangle split, rectangle split parts=2] (init){Original Equation \nodepart{second}{$u_{xx}=\phi(x,u,u_x)$}};

%\node [cloud, below of=init, left of=init] (textolambda1) {$(\mathbf{v}_1,\lambda_1)$};
\node [block, below of=init, left of=init, xshift=-3.5cm] (lambda1) {$\mathbf{X}_1=\partial_u^{[\lambda_1,(1)]}$}; \node [block, below of=init, right of=init,
xshift=3.5cm] (lambda2) {$\mathbf{X}_2=\partial_u^{[\lambda_2,(1)]}$}; \node [block, below of=lambda1,yshift=-0.5cm] (reducida1) {${w_1}_x=\phi_1(x,w_1)$}; \node [block,
below of=lambda2,yshift=-0.5cm] (reducida2) {${w_2}_x=\phi_2(x,w_2)$}; \node [block, below of=reducida1,rectangle split, rectangle split parts=2,yshift=-1cm] (I1)
{$I_1(x,u,u_x)=C_1$\nodepart{second}{$u_x=H_1(x,u,C_1)$}}; \node [block, below of=reducida2,rectangle split, rectangle split parts=2,yshift=-1cm] (I2)
{$I_2(x,u,u_x)=C_2$\nodepart{second}{$u_x=H_2(x,u,C_2)$}}; \node [block, below of=init] (rho)
{$\rho=\frac{\mathbf{X}_1(\lambda_2)-\mathbf{X}_2(\lambda_1)}{\lambda_1-\lambda_2}$}; 
\node [block, below of=rho, right of=rho, xshift=-0.45cm,yshift=-0.5cm]
(Z1) {$\mathbf{Y}_1=f_1\Yu$}; 
\node[block, below of=rho, left of=rho, xshift=0.45cm,yshift=-0.5cm] (Z2) {$\mathbf{Y}_2=f_2\Yd$}; \node[block, below of=Z2,xshift=-0.25cm, yshift=-1cm] (V2)
{$\mathbf{Z}_2=f_2g_2\Yd$}; \node[block, below of=Z1,xshift=0.25cm,yshift=-1cm] (V1) {$\mathbf{Z}_1=f_1g_1\Yu$}; \node [block, below of=V2, left of=V2, xshift=3cm] (w1w2)
{$dI_1,dI_2$}; \node [block, below of=w1w2,rectangle split, rectangle split parts=2] (sol) {General Solution\nodepart{second}{$I_1=C_1,I_2=C_2$}}; \node
[block, below of=I1,yshift=-1.5cm,rectangle split, rectangle split parts=2] (solAUX1) {General
Solution\nodepart{second}{$\widetilde{I_2}(x,u,C_1)=\widetilde{C_2}$}}; \node [block, below of=I2,yshift=-1.5cm,rectangle split, rectangle split parts=2]
(solAUX2) {General Solution\nodepart{second}{$\widetilde{I_1}(x,u,C_2)=\widetilde{C_1}$}};
% \node [block, left of=evaluate, node distance=2.5cm] (update) {update model};
% \node [decision, below of=evaluate] (decide) {is best candidate better?};
% \node [block, below of=decide, node distance=2.5cm] (stop) {stop};
% Draw edges
\path [line] (init) -- node [left, xshift=-0.25cm, yshift=0.25cm, color=black] {$(\mathbf{v}_1,\lambda_1)$} (lambda1);
% \path [line] (textolambda1) -- (lambda1);
\path [line] (init) -- node [right, xshift=0.25cm, yshift=0.25cm, color=black] {$(\mathbf{v}_2,\lambda_2)$} (lambda2); \path [line] (lambda1) -- node [left, color=black]
{$\mathbf{X}_1(w_1)=0$} (reducida1); \path [line] (lambda2) -- node [right, color=black] {$\mathbf{X}_2(w_2)=0$} (reducida2); \path [line] (reducida1) -- node [left,
color=black] { \begin{tabular}{c} I.F. ${\nu}_1$ \\ Eq. (\ref{ifredu})
\end{tabular}} (I1);
\path [line] (reducida2) -- node [right, color=black] {
\begin{tabular}{c}
I.F. ${\nu}_2$ \\ Eq. (\ref{ifredu})
\end{tabular}
} (I2); \path [line] (I1) -- node [left,xshift=-0.25cm, color=black] {
\begin{tabular}{c}
I.F. $\widetilde{\nu}_1$ \\ Eq. (\ref{ifauxiliar})
\end{tabular}
} (solAUX1); \path [line] (I2) --node [right, xshift=0.25cm, color=black] {
\begin{tabular}{c}
I.F. $\widetilde{\nu}_2$ \\ Eq. (\ref{ifauxiliar})
\end{tabular}
} (solAUX2); \path [line] (lambda1.east) -- (rho.west); \path [line] (lambda2.west) -- (rho.east); \path [line] (rho) -- node [right, xshift=0.2cm, yshift=0.1cm, color=black]
{$\Yd(f_1)=\rho\,f_1$} (Z1); \path [line] (rho) -- node [left, xshift=-0.2cm, yshift=0.1cm, color=black] {$\Yu(f_2)=\rho\,f_2$} (Z2); 
\path [line] (Z2.west) -- node [above]
{$(\partial_{w_1},\rho_2)$} (reducida1.east); 
\path [line] (Z1.east) -- node [above] {$(\partial_{w_2},\rho_1)$}(reducida2.west); \path [line] (Z2) -- node [left,
color=black] {$\mathbf{A}(g_2)=\rho_2g_2$} (V2); \path [line] (Z1) -- node [right, color=black] {$\mathbf{A}(g_1)=\rho_1g_1$} (V1); \path [line] (V1) -- (w1w2); \path [line]
(V2) -- (w1w2); \path [line] (w1w2) -- (sol);
\path [line, dashed]
(I1.south) -- (sol.north);\path [line,dashed]
(I2.south) -- (sol.north);
%\draw[->] (init.north west) -| node [left, color=black] {I.F. $\mu_1(x,u,u_x)$} (sol.west);

\draw[->,>=triangle 90] (init.west) -| ([yshift=0cm, xshift=-0.7cm]reducida1.west) node [left, color=black] { \begin{tabular}{c}
I.F. $\mu_1$ %$\mu_1=\frac{1}{f_2g_2(\lambda_2-\lambda_1)}$
\\ Eq. (\ref{factint})
\end{tabular} } |-
(I1.west);
% \draw[-, thick] (I1.east) -- ++(4.5,0) (I2.west);
%\draw[<-,thick] (sol.north) -- ++(0,0.85);
\draw[->, >=triangle 90] (init.east) -| ([yshift=0cm, xshift=3.7cm]reducida2.west) node [right, color=black] { \begin{tabular}{c}
I.F. $\mu_2$ %$\mu_2=\frac{1}{f_1g_1(\lambda_1-\lambda_2)}$
\\ Eq. (\ref{factint})
\end{tabular} }|-
(I2.east);
%\node[fit=(init) (sol) (I1) (I2), transform shape=false,draw]{};
\node[fit=(init) (sol) (I1) (I2), transform shape=false]{};
\end{tikzpicture}
\caption{Integration of a second-order ODE with two non-equivalent $\lambda-$symmetries}
\label{diagrama}
%\end{sidewaysfigure}
\end{figure}
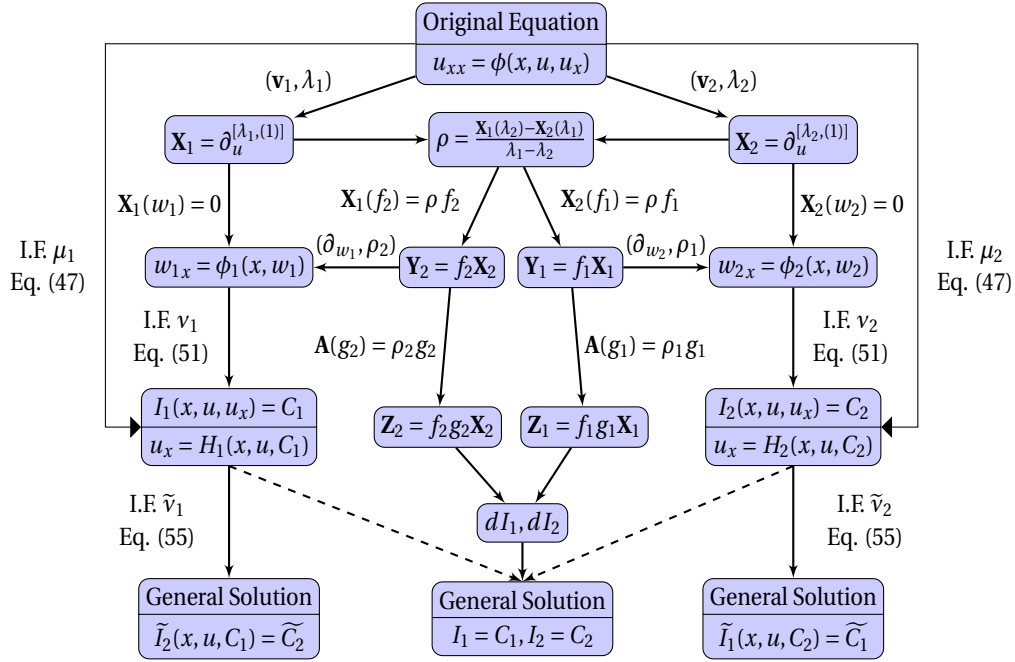

%$\boldsymbol{\omega}_1=dI_1,\boldsymbol{\omega}_2=dI_2$

The lateral branches in Figure \ref{diagrama} correspond to the  integration methods derived from the existence of two non-equivalent $\mathcal{C}^\infty-$sym\-me\-tries \cite{muriel01ima1}. The central branches of that figure summarize  the main topic of this paper:   a  schematic  description  of the corresponding procedure will be shown in the next section.

\section{Procedure for integration by quadratures and some examples} \label{sect7}
The following procedure describes the steps that can be followed to obtain the solution by quadratures, according to the method discussed in this paper.

\begin{enumerate}
\item Compute the function $\rho=\dfrac{\mathbf{X}_1(\lambda_2)-\mathbf{X}_2(\lambda_1)}{\lambda_1-\lambda_2},$  where $\mathbf{X}_i=\partial_u+\lambda_i\partial_{u_x},$ for $i=1,2.$
\item Calculate two functions $f_1,f_2$ such that  $\dfrac{\mathbf{X}_1(f_2)}{f_2}= \dfrac{\mathbf{X}_2(f_1)}{f_1}=\rho.$
\item Compute two functions $w_1=w_1(x,u,u_x)$ and $w_2=w_2(x,u,u_x),$ by solving {by quadratures} the systems 

$$\begin{array}{l}
({w_1})_{u}=\dfrac{\lambda_1}{f_2(\lambda_1-\lambda_2)},\qquad ({w_1})_{u_{x}}=-\dfrac{1}{f_2(\lambda_1-\lambda_2)},\\({w_2})_u=\dfrac{\lambda_2}{f_1(\lambda_2-\lambda_1)},\qquad ({w_2})_{u_x}=-\dfrac{1}{f_1(\lambda_2-\lambda_1)}.
\end{array}$$
\item Calculate the function ${\phi}_i(x,w_i)=\campoA(w_i),$ for $i=1,2.$ 
%to construct the reduced equations
%$$(w_1)_x={\phi}_1(x,w_1) \quad
%\mbox{ and } \quad (w_2)_x={\phi}_2(x,w_2)$$  associated to $(\partial_u,\lambda_1)$ and $(\partial_u,\lambda_2),$ respectively.
%\item 
Find a
particular solution $g_1=g_1(x,w_2)$ (resp. $g_2=g_2(x,w_1)$) to the first (resp. second) equation that follows: 
% to the first-order PDEs
$$ ({{g}_1})_x+({{g}_1})_{w_2}{\phi}_2={{g}_1}\,({{\phi}_2})_{w_2}  \quad  \mbox{and} \quad
({{g}_2})_x+({{g}_2})_{w_1}{\phi}_1= {{g}_2}\,({{\phi}_1})_{w_1}.
$$
\item Use the functions $f_1,f_2$ of step 2 and the functions $g_1,g_2$ of step 4  (expressed in terms of $\{x,u,u_x\}$) to calculate two independent first integrals  $I_1$ and $I_2$ by solving by quadratures the systems 
$$\begin{array}{l}
{I_1}_x=\dfrac{\lambda_1u_x-\phi}{f_2g_2(\lambda_2-\lambda_1)},\quad {I_1}_u=\dfrac{-\lambda_1}{f_2g_2(\lambda_2-\lambda_1)},\quad
{I_1}_{u_x}=\dfrac{1}{f_2g_2(\lambda_2-\lambda_1)},\\

 {I_2}_x=\dfrac{\lambda_2u_x-\phi}{f_1g_1(\lambda_1-\lambda_2)},\quad
{I_2}_u=\dfrac{-\lambda_2}{f_1g_1(\lambda_1-\lambda_2)},\quad {I_2}_{u_x}=\dfrac{1}{f_1g_1(\lambda_1-\lambda_2)}.
\end{array}$$
\end{enumerate}
As an immediate consequence of the above-described procedure the following mathematical objects are obtained:

\begin{itemize}
\item[a)] The general solution of the equation: $I_1=C_1,I_2=C_2,$ where $C_1,C_2\in \mathbb{R}.$
\item[b)] The integrating factors of  equation (\ref{orden2}) given by 
 $$\begin{array}{ll}\mu_1=\dfrac{1}{f_2g_2(\lambda_2-\lambda_1)}, &  \mu_2=\dfrac{1}{f_1g_1(\lambda_1-\lambda_2)}.\end{array}$$
\item[c)] The Jacobi last multiplier of  equation (\ref{orden2}) defined by 
$$M=\dfrac{1}{ f_1{g_1}f_2{g_2}({\lambda_2}-{\lambda_1}) }.$$
\item[d)] Two commuting generalized symmetries of equation (\ref{orden2}):  $$\mathbf{Z}_i=(f_ig_i)\partial_{u}+\campoA(f_ig_i)\partial_{u_x},\qquad (i=1,2).$$
\end{itemize}

\begin{example}
\label{painleveF0}
{\rm Although the general solution (\ref{generalsol}) of any equation of the form (\ref{painleve})  has already been obtained through Examples \ref{eje1}-\ref{eje6}, in this example it is considered the special case  $F(x)=0$ 
\begin{equation}\label{painleve0}
u_{xx}-\frac{1}{2\,u}u_{x}^2+ 2\,u u_{x} +\frac{1}{2}u^{3}+\frac{1}{2\,u}=0,
\end{equation}
for purposes of illustration of the procedure. In this case the computations appointed by  the former procedure can be followed without the aid of a computer; they are relatively simple.  

The three first steps of the method proceed as explained in Examples \ref{eje1}, \ref{eje2} and \ref{eje3}, because the  $\lambda-$symmetries defined by (\ref{lambdasymeje}) do not depend on $F(x):$
\begin{enumerate}
\item[Step 1] $\rho(x,u,u_x)=\dfrac{2}{u}.$
\item[Step 2] $f_1(x,u,u_x)=f_2(x,u,u_x)=u^2.$
\item[Step 3] $ w_1=-{\dfrac {u_x+{u_{{}}}^{2}+1}{2u_{{}}}}\quad \mbox{and} \quad w_2={\dfrac {u_x+{u_{{}}}^{2}-1}{2u_{{}}}}.$
\item[Step 4] Since $\phi_1=\campoA(w_1)={w_1}^{2}-\dfrac{1}{2}$ and $ \phi_2=\campoA(w_2)=-{w_2}^{2}-\dfrac{1}{2},$ two particular solutions of $$({g}_1)_x-\left({w_2}^{2}+\dfrac{1}{2}\right)(g_1)_{w_2}=-2w_2 {g}_1\quad \mbox{and} \quad
({g_2})_x+\left({w_1}^{2}-\dfrac{1}{2}\right)(g_2)_{w_1}=2w_1 {g}_2$$ can be easily found:  ${{g}_1}(x,w_2)=2\,w_2^{2}+1$ and ${{g}_2}(x,w_1)=2\,w_1^{2}-1.$
\item[Step 5] By using the functions $f_1,f_2$ of step 2  and the functions $g_1,g_2$ of step 4, written in variables $\{x,u,u_x\}$ (by means of  the expressions of $w_1$ and $w_2$ given in step 3), the systems that correspond to (\ref{I1}) and (\ref{I2}) are
$$\begin{array}{lll}
{I_1}_x=-\dfrac{1}{2},& {I_1}_u= -{\dfrac {{u_{{}}}^{2}-u_x-1}{ \left( {u_{{}}}^{2}+u_x+1 \right) ^{2}-2\,{u_{{}}}^{2}}},&
{I_1}_{u_x}=-{\dfrac {u_{{}}}{ \left(
{u_{{}}}^{2}+u_x+1 \right) ^{2}-2\,{u_{{}}}^{2}}},\\

 {I_2}_x=\dfrac{1}{2},&
{I_2}_u={\dfrac {{u_{{}}}^{2}-u_x+1}{ \left( {u_{{}}}^{2}+u_x-1 \right) ^{2}+2\,{u_{{}}}^{2}}},& {I_2}_{u_x}={\dfrac {u_{{}}}{ \left( {u_{{}}}^{2}+u_x-1
\right) ^{2}+2\,{u_{{}}}^{2}}}.
\end{array}$$
These systems can be solved by quadratures and the solutions provide two independent first integrals for equation (\ref{painleve0}):
%\begin{equation}
%{I_1}=\dfrac{1}{2}\left(x-\sqrt {2}\,{\mbox{arctanh}} \left( \sqrt {2}\,w_1 \right)\right),\qquad
%{I_2}=-\dfrac{1}{2}\left(x-\sqrt {2}{\mbox{arctanh}} \left( \sqrt {2}w_2 \right)\right).
%\end{equation} When the functions $w_1$ and $w_2$ are expressed in terms of the original variables $\{x,u,u_{x}\},$ according to (\ref{varphiej}), we get two functionally independent first integrals of (\ref{painleve}):
%\begin{equation}
%\label{inteje}
%I_1=-1/2\,x+1/2\,{\it arctanh} \left( 1/2\,{\frac { \left( {u_{{}}}^{2}+u_
%{{1}}+1 \right) \sqrt {2}}{u_{{}}}} \right) \sqrt {2},
%I_2=1/2\,x+1/2\,\arctan \left( 1/2\,{\frac { \left( {u_{{}}}^{2}+u_x-1
% \right) \sqrt {2}}{u_{{}}}} \right) \sqrt {2}.
%\end{equation}
%\begin{equation}\label{ipeje}
$$\begin{array}{l}I_1=-\dfrac{1}{2}\left(x-\sqrt {2}\,{\mbox{arctanh}} \left({\dfrac {{u_{{}}}^{2}+u_x+1}{\sqrt {2}u_{{}}}} \right)\right),\\ I_2=\dfrac{1}{2}\left(x+\sqrt {2}\,{\mbox{arctan}} \left( {\dfrac {{u_{{}}}^{2}+u_x-1}{\sqrt {2}u_{{}}}} \right)\right).
\end{array}$$
%\end{equation}
\end{enumerate}
Some consequences of the procedure are:
\begin{enumerate}
\item[a)] From $I_1=C_1,I_2=C_2,$ where $C_1,C_2\in \mathbb{R},$ the general solution of (\ref{painleve0}) can be (locally) written in explicit form as follows:
\begin{equation}\label{solgen}
u(x)=\dfrac{\sqrt {2}}{\tanh \left( \frac{\sqrt {2}}{2} \left( x +2C_1\right)\right) -\tan \left( \frac{\sqrt {2}}{2} \left( -x +2C_2\right) \right)},\qquad
C_1,C_2\in\mathbb{R}.
\end{equation}
\item[b)] Two  integrating factors for (\ref{painleve0}) are: 

%\begin{equation}
%\label{fieje} 
$$\mu_1=-\frac {u_{{}}}{\left( {u_{{}}}^{2}+u_x+1 \right) ^{2}-2\,{u_{{}}}^{2}}, \qquad \mu_2={\frac {u_{{}}}{\left( {u_{{}}}^{2}+u_x-1 \right)
^{2}+2\,{u_{{}}}^{2}}}.$$
%\end{equation}
\item[c)] A  Jacobi last multiplier for (\ref{painleve0}) is:
%\begin{equation}
$$M=\frac{-2\,u}{ \left( \left( {u_{{}}}^{2}+u_x-1 \right) ^{2}+2\,{u_{{}}}^{2} \right) \left( \left( {u_{{}}}^{2}+u_x+1 \right) ^{2}-2\,{u_{{} }}^{2} \right) }.$$
%\end{equation}

\item[d)] Two commuting generalized symmetries for (\ref{painleve0}) are:
%\begin{equation}
$${\mathbf{Z}}_1=\dfrac{1}{2}\left((u^2+u_x-1) ^{2}+u^{2}\right)
\left(\partial_u+\frac {u_x-u^2+1}{u}\partial_{u_x}\right)
$$ 
%\end{equation} 
and
%\begin{equation}
$${\mathbf{Z}}_2=\dfrac{1}{2}\left((u^2+u_x+1) ^{2}-u^{2}\right)
\left(\partial_u+\frac {u_x-u^2-1}{u}\partial_{u_x}\right).
$$
%\end{equation}

\end{enumerate}

Observe that equation (\ref{painleve0}) admits the Lie point symmetry $\partial_x$ and that this is the unique Lie point symmetry admitted by the equation. The classical Lie method of reduction provides the Abel equation
\begin{equation}\label{reducida}
w_y ={\frac { \left( {y}^
{4}+1 \right) w^{3}}{2 y}}+2\,
w^{2}y-{\frac {w }{2\,y}},
\end{equation} where $y=u$ and $w=1/u_x.$ The general solution of  (\ref{reducida}) can be expressed in the form  $\Delta(y,w(y),C_1)=0,$ where $\Delta$ involves generalized hypergeometric functions. The  general solution of (\ref{painleve0}) arises by solving the corresponding first-order equation $\Delta(u,1/u_x,C_1)=0.$ Due to the involved expression of  $\Delta,$  to perform the quadrature needed for giving such explicit solution does not seem an easy task. Nevertheless, the presented procedure leads to the very compact form (\ref{solgen}) for such general solution.}
\hfill\qedsymbol\end{example}

\begin{example}
\label{painleveotraF}
{\rm  The expressions obtained in Examples \ref{eje1}, \ref{eje3} and \ref{eje4} can be used to integrate any equation of the form (\ref{painleve}),  even if it does not admit Lie point symmetries. This is the case, for instance, of the equation \begin{equation}
\label{painleveparticular}
u_{xx}-\frac{1}{2\,u}u_{x}^2+ 2\,u u_{x} +\frac{1}{2}u^{3}-(2x+1)  u+\frac{1}{2\,u}=0
\end{equation} which corresponds to $F(x)=2x+1.$ 

 For this example, the second-order linear equations (\ref{lineal}) are  the Airy equations
\begin{equation}\label{linealparticular}
\psi_{xx}=\left(1+x\right)\,\psi,\qquad \theta_{xx}=\displaystyle{x}\,\theta.
\end{equation}
Let  $\psi_1(x)={\rm Ai}(1+x),\psi_2(x)={\rm Bi}(1+x)$ and  $\theta_1(x)={\rm Ai}(x),\theta_2(x)={\rm Bi}(x)$ denote linearly independent solutions of the respective Airy equations (\ref{linealparticular}). 
%It can be checked that the functions
%\begin{equation}\label{gtilde}
%\widehat{g}_1(x,w_2)=-\dfrac{ \left( \theta_2 w_2-\theta_2'  \right) ^{2}}
%{\theta_2 \theta_1'-  \theta_2'
% \theta_1 
%}\quad \mbox{and} \quad
%\widehat{g}_2(x,w_1)=-\dfrac{\left( \psi_2 w_1+\psi_2'  \right) ^{2}
%}{\psi_2 \psi_1' -  \psi_2' 
% \psi_1 
%}
%\end{equation} satisfy the corresponding equation in (\ref{pderedeje}).

Two independent first integrals of the equation (\ref{painleveparticular}) corresponding to the first integrals (\ref{ipejegeneral}) become:
%\begin{equation}
%\label{ipejegeneralparticular1}
$$I_1={\frac {-2\,{{\rm Ai}'\left(x+1\right)}u_{{}}+{{\rm Ai}\left(x+1
\right)} \left( {u_{{}}}^{2}+u_x+1 \right) }{-2\,{{\rm Bi}^{(1
)}\left(x+1\right)}u_{{}}+{{\rm Bi}\left(x+1\right)} \left( {u_{{}}}^{
2}+u_x+1 \right) }}
$$
%\end{equation} 
and 
%\begin{equation}
%\label{ipejegeneralparticular2}
$$
I_2={\frac {-2\,{{\rm Ai}'\left(x\right)}u_{{}}+ \left( {u_{{}}}^{2}+
u_x-1 \right) {{\rm Ai}\left(x\right)}}{-2\,{{\rm Bi}'\left(x
\right)}u_{{}}+ \left( {u_{{}}}^{2}+u_x-1 \right) {{\rm Bi}\left(x
\right)}}}
.
$$
%\end{equation}
These first integrals provide the general solution of equation (\ref{painleveparticular}) in terms  of Airy functions:
%\begin{equation}\label{generalsolparticular}
$$u_{{}}(x)=\left({\frac {C_1\,{{\rm Bi}'\left(x+1\right)}-{{\rm Ai}^{(1
)}\left(x+1\right)}}{{C_1}\,{{\rm Bi}\left(x+1\right)}-{{\rm Ai}
\left(x+1\right)}}}-{\frac {{\it C_2}\,{{\rm Bi}'\left(x
\right)}-{{\rm Ai}'\left(x\right)}}{{ C_2}\,{{\rm Bi}\left(x
\right)}-{{\rm Ai}\left(x\right)}}}\right)^{-1},
$$
% \end{equation} 
where $C_1,C_2\in \mathbb{R}.$

%A Jacobi last multiplier of the equation (\ref{painleveparticular}) corresponding to (\ref{JLMejgeneral}) becomes:
%\begin{equation}
%\label{JLMejgeneralparticular}
%M=2\,{\frac {u_{{}} \left( -{{\rm Ai}'\left(x\right)}{{\rm Bi}
%\left(x\right)}+{{\rm Bi}'\left(x\right)}{{\rm Ai}\left(x\right)}
% \right)  \left( {{\rm Ai}'\left(x+1\right)}{{\rm Bi}\left(x+1
%\right)}-{{\rm Bi}'\left(x+1\right)}{{\rm Ai}\left(x+1\right)}
% \right) }{ \left( {{\rm Bi}'\left(x+1\right)}u_{{}}-1/2\,{
%{\rm Bi}\left(x+1\right)} \left( {u_{{}}}^{2}+u_x+1 \right) 
% \right) ^{2} \left( -2\,{{\rm Bi}'\left(x\right)}u_{{}}+ \left( 
%{u_{{}}}^{2}+u_x-1 \right) {{\rm Bi}\left(x\right)} \right) ^{2}}}.
%\end{equation}

Finally, observe that although equation (\ref{painleveparticular}) does not admit Lie point symmetries,  Theorem \ref{teofinal} can be used to construct the following generalized symmetries for equation (\ref{painleveparticular}) 
%\begin{equation}
$${\mathbf{Z}}_1={\frac { \left( -2\,{{\rm Bi}'\left(x+1\right)}u_{{}}+{{\rm Bi}
\left(x+1\right)} \left( {u_{{}}}^{2}+u_x+1 \right)  \right) ^{2}
}{4\,{{\rm Ai}\left(x+1\right)}{{\rm Bi}'\left(x+1\right)}-4\,{
{\rm Bi}\left(x+1\right)}{{\rm Ai}'\left(x+1\right)}}}
\left(\partial_u+\frac {u_x-u^2+1}{u}\partial_{u_x}\right)
$$
%\end{equation} 
and
%\begin{equation}
$${\mathbf{Z}}_2={\frac { \left( -2\,{{\rm Bi}'\left(x\right)}u_{{}}+ \left( {u_{{
}}}^{2}+u_x-1 \right) {{\rm Bi}\left(x\right)} \right) ^{2}}{4\,{
{\rm Bi}'\left(x\right)}{{\rm Ai}\left(x\right)}-4\,{{\rm Ai}'\left(x\right)}{{\rm Bi}\left(x\right)}}}
\left(\partial_u+\frac {u_x-u^2-1}{u}\partial_{u_x}\right).
$$
%\end{equation}
}\hfill \qedsymbol
\end{example}

\begin{example}
{\rm  In the application of the procedure some difficulties can appear; in this example it is shown how the choosing  of an appropriate   route in the diagram of Figure \ref{diagrama} may help to  overcome these difficulties.
The equation
\begin{equation}
\label{ejemplonuevo}
u_{xx} +{\frac {u_x}{u }}+{\frac {1}{u }}+u=0
\end{equation} does only admit the Lie point symmetry $\mathbf{v}=\partial_x.$ This Lie point symmetry leads to the  reduced equation
\begin{equation}
\label{reducidaejenuevo}
w_y={\frac { \left( {y}^{2}+1
 \right)  w ^{3}}{y}}+{\frac {
w^{2}}{y}},
\end{equation} where $y=u$ and $w=1/u_x.$ Equation (\ref{reducidaejenuevo}) is an Abel equation whose solution can be given in implicit form. By substituting $y=u$ and $w=1/u_x$ into this general solution,  the first-order equation
%\begin{equation}
%\dfrac{\sqrt{-y(t)^2 t^2-(y(t)+1)^2}}{y(t)}+\arctan\left(\dfrac{y(t)+1}{\sqrt{-y(t)^2 t^2-(y(t)+1)^2}}\right)=C_1,
%\end{equation} for $C_1\in \mathbb{R}.$ To recover the general solution of (\ref{reducidaejenuevo})
%\begin{equation}\label{auxiliarLie}
%{\sqrt{-u ^{2}- ( u_x+1) ^{2}}}+\arctan\left(\dfrac{u_x+1}{\sqrt{-u ^{2}- ( u_x+1) ^{2}}}\right)=C_1,
%\end{equation}

\begin{equation}\label{auxiliarLie}
{\sqrt{u ^{2}+ ( u_x+1) ^{2}}}-\mbox{arctanh}\left(\dfrac{u_x+1}{\sqrt{u ^{2}+ ( u_x+1) ^{2}}}\right)=C,
\end{equation}
where $C\in \mathbb{R},$ is obtained. If (\ref{auxiliarLie}) is expressed in the form $u_x=H(u,C),$ then the general solution of (\ref{ejemplonuevo}) becomes
%\begin{equation}
%\label{quadra}
$$\int\frac{du}{H(u,C)}=x+K, \quad K\in \mathbb{R}.
$$
%\end{equation} 
The obtained expression for this general solution requires a primitive of the function $1/H.$ Since  $u_x$ cannot be isolated from (\ref{auxiliarLie}),  to obtain the general solution of (\ref{ejemplonuevo}) in a closed form seems an impossible task.

The canonical representative of the equivalence class of the pair $(\partial_x,0)$ is the $\lambda-$symmetry $(\partial_u,\lambda_1),$ for $$\lambda_1=\dfrac{\campoA (Q)}{Q}=-\left(\dfrac{1}{u}+\dfrac {{u_{{}}}^{2}+1}{u_{{x}}u_{{}}}\right),$$ where $Q=-u_x$ is the characteristic of $\partial_x$ and $\campoA$ denotes the vector field associated to (\ref{ejemplonuevo}). This function $\lambda_1$ solves the determining equation (\ref{eqdet2}) for equation (\ref{ejemplonuevo}).
If  another particular solution is searched for  such determining equation, it is not difficult to find a solution which is linear in $u_x:$
%\begin{equation}
%\label{lambda2eje}
$$\lambda_2=\frac{u_x+1}{u}.
$$
%\end{equation}
Once two $\lambda-$symmetries are known, the algorithm can be started:
\begin{enumerate}
\item  Construct the vector fields (\ref{Ys}):
$$\Yu=\partial_u-\left(\dfrac{1}{u}+{\dfrac {{u_{{}}}^{2}+1}{u_{{x}}u_{{}}}}\right)\partial_{u_x}\quad \mbox{and} \quad \Yd=\partial_u+\dfrac{u_x+1}{u}\partial_{u_x}$$ and the function (\ref{ro}):
$\rho=\dfrac{u_x+1}{u_x\,u}.$
\item  A function $f_1$ satisfying (\ref{f1f2}) can be easily determined: $f_1=u_x.$ However, the determination of a function $f_2$ such that $\Yu(f_2)=\rho f_2$ seems quite complicated. %The Maple system provides a rather involved expression in terms of the sine and cosine integrals.
The explicit expression of $f_2$ will be skipped, for a moment. 
\item Since $f_1$ is known,  system (\ref{m2}) can be constructed and integrated by qua\-dra\-ture to obtain the solution $w_2=\arctan\left(\dfrac{u}{1+u_x}\right).$

The computation of function $w_1$  cannot be obtained by quadrature from system (\ref{m1}) without the expression of $f_2.$  Nevertheless, observe that the left hand  side of (\ref{auxiliarLie}) defines a function $\widetilde{w}_1$
%\begin{equation}
%\label{I1ejemplonuevo}
%\widetilde{w}_1={\sqrt{u ^{2}+ ( u_x+1) ^{2}}}-\mbox{arctanh}\left(\dfrac{\sqrt{u ^{2}+ ( u_x+1) ^{2}}}{u_x+1}\right)
%\end{equation}
that is a first integral of $\campoA$ and $\partial_x.$ Since  $(\partial_x,0)\stackrel{\campoA}{\sim}(\partial_u,\lambda_1),$ then $\widetilde{w}_1$ is an invariant for $\Yu.$  In coordinates $\{x,\widetilde{w}_1,w_2\}$ the equation  $\Yu(f_2)=\rho f_2$ is simple and $f_2(x,\widetilde{w}_1,w_2)=\sin(w_2)$ is a particular solution. In the original variables such function becomes
$$f_2(x,u,u_x)=\dfrac{u}{\sqrt{{u ^{2}+ ( u_x+1) ^{2}}}}.$$
Now system (\ref{m2}) is known and can be integrated by quadrature to obtain the solution $$w_1=\sqrt{u ^{2}+ ( u_x+1) ^{2}}-\ln\left|\dfrac{\sqrt{u ^{2}+ ( u_x+1) ^{2}}+u_x+1}{u}\right|.$$

\item  The functions $\phi_1(x,w_2)=\campoA(w_2)=1$ and $\phi_1(x,w_1)=\campoA(w_1)=0$ provide the first-order PDEs
$$ ({{g}_1})_x+({{g}_1})_{w_2}=0  \quad  \mbox{and} \quad
({{g}_2})_x= 0.
$$ The particular solutions  $g_1(x,w_2)=1$ and $g_2(x,w_1)=1$ arise immediately.
\item The functions $f_1,f_2$ of step 2 and 3 and the functions $g_1,g_2$ of step 4  (expressed in terms of $\{x,u,u_x\}$) is all it is  needed to calculate two independent first integrals  $I_1$ and $I_2$ by solving by quadratures  systems (\ref{I1})-(\ref{I2}):
$$\begin{array}{l}
I_1=\sqrt{u ^{2}+ ( u_x+1) ^{2}}-\mbox{arctanh}\left(\dfrac{u_x+1}{\sqrt{u ^{2}+ ( u_x+1) ^{2}}}\right),\\
I_2=x-\arctan\left(\dfrac{u}{u_x+1}\right)
.\end{array}$$
\end{enumerate}
By using these first integrals the general solution of equation (\ref{ejemplonuevo}) can be locally given in explicit form:
 \begin{equation}
 \label{solgenejemplonuevo}
 u(x)=\sin (C_2-x)\Bigl(C_1-\mbox{arctanh}\bigl(\cos(C_2-x)\bigr)\Bigr), \quad C_1,C_2\in\mathbb{R}.
 \end{equation}
 As a remark, expression (\ref{solgenejemplonuevo}) can be used to derived the solution in parametric form of the Abel  equation (\ref{reducidaejenuevo}):
% \begin{equation}
% \label{solabelpara}
$$\begin{array}{lll}
 y&=&\sin (C_2-x)\Bigl(C_1-\mbox{arctanh}\bigl(\cos(C_2-x)\bigr)\Bigr),\\{}\\
  w&=&\dfrac{1}{\cos (C_2-x)\Bigl(\mbox{arctanh}\bigl(\cos(C_2-x)\bigr)-C_1\Bigr) -1 }.

  \end{array}
  $$
  % \end{equation}
%\begin{enumerate}
%\item The general solution that can be obtained from $I_1=C_1,I_2=C_2,$ in explicit form
%\begin{equation}
%\label{solgenejemplonuevo}
%u(x)=\ln  \left( {\frac {\cos (C_2-x) +1}{\sin \left( x
%+C_1 \right) }} \right) \sin (C_2-x) -{\it
%\_C2}\,\sin (C_2-x)
%\end{equation}
%\item Two integrating factors $$\mu_1=\dfrac{u}{{u ^{2}+ ( u_x+1) ^{2}}},\qquad\mu_2=\dfrac{u_x}{\sqrt{u ^{2}+ ( u_x+1) ^{2}}}.$$
%\item The Jacobi last multiplier
%
%\end{enumerate}

}\end{example}

\section{Concluding remarks}

If two non-equivalent $\mathcal{C}^{\infty}-$symmetries for a given second-order ODE  are known, then the independent use of these $\mathcal{C}^{\infty}-$symmetries gives two reduced equations whose integration, not  necessarily by quadrature, can provide the general solution of the ODE. %by means of two independent first integrals.
%The complete integration of the equation requires to solve the corresponding auxiliary equation. Alternatively, if a second $\mathcal{C}^{\infty}-$symmetry is known, other
%first integral can be derived by the corresponding reduction process. This reduction is independent of the reduction corresponding to the first
%$\mathcal{C}^{\infty}-$symmetry.
In this paper we provide a new method to obtain by quadratures two independent first integrals of the ODE by the combined use of both $\mathcal{C}^{\infty}-$symmetries.

This is done by constructing
a system of two commuting generalized symmetries of the ODE. From these generalized symmetries two independent first integrals of the equation
arise by quadratures.

A comparative study between this new method of integration of the ODE and the reduction methods associated to the given $\mathcal{C}^{\infty}-$symmetries is also provided.
Some relationships between the functions involved in the new method and integrating factors of the reduced and auxiliary equations associated to the
$\mathcal{C}^{\infty}-$symmetries have been established.
Such functions and the $\mathcal{C}^{\infty}-$sy\-mme\-tries provide an explicit formula for a Jacobi last multiplier that generalizes (for $n=2$) the classical expression in terms of Lie
point symmetries.

The results have been illustrated with a family of equations of the XXVII case in the  Painlev\'e-Gambier classification. By using two $\lambda-$symmetries and two fundamental sets of solutions of two second-order linear ODEs, the equations can be integrated by quadratures. Explicit expressions for the first integrals, integrating factors, a Jacobi last multiplier and the general solution have also been provided as by-products of the quadrature process.

The method has been successfully applied to integrate by quadratures $2$nd-order ODEs lacking Lie point symmetries or admitting just one Lie point symmetry.

The results presented in this paper could provide novel methods to find exact solutions of nonlinear
ordinary and partial differential equations as well as establish new connections between analytical methods which are widely used in the contemporary literature \cite{mohanasubha2014interplay,mohanasubha2015certain,bhuvaneswari2012application}.

\section*{Acknowledgements} The authors acknowledge the financial support from the University of Cádiz and from Junta de Andalucía to the research group FQM 377.  A. Ruiz  acknowledges the support of a grant of the University of C\'adiz program ``Movilidad Internacional, Becas UCA- Internacional-Posgrado'' during his stay at the  University of Minnesota.
\bibliographystyle{elsarticle-num-names}

\begin{thebibliography}{37}
\expandafter\ifx\csname natexlab\endcsname\relax\def\natexlab#1{#1}\fi
\providecommand{\url}[1]{\texttt{#1}}
\providecommand{\href}[2]{#2}
\providecommand{\path}[1]{#1}
\providecommand{\DOIprefix}{doi:}
\providecommand{\ArXivprefix}{arXiv:}
\providecommand{\URLprefix}{URL: }
\providecommand{\Pubmedprefix}{pmid:}
\providecommand{\doi}[1]{\href{http://dx.doi.org/#1}{\path{#1}}}
\providecommand{\Pubmed}[1]{\href{pmid:#1}{\path{#1}}}
\providecommand{\bibinfo}[2]{#2}
\ifx\xfnm\relax \def\xfnm[#1]{\unskip,\space#1}\fi
%Type = Book
\bibitem[{Olver(1993)}]{olver1993applications}
\bibinfo{author}{P.~Olver}, \bibinfo{title}{Applications of {L}ie groups to
  differential equations}, \bibinfo{edition}{2nd} ed.,
  \bibinfo{publisher}{Springer-Verlag}, \bibinfo{address}{New York},
  \bibinfo{year}{1993}.
%Type = Book
\bibitem[{Ibragimov(2010)}]{ibragimovlibro}
\bibinfo{author}{N.~H. Ibragimov}, \bibinfo{title}{A practical course in
  differential equations and mathematical modelling}, \bibinfo{publisher}{World Scientific},
  \bibinfo{address}{Beijing}, \bibinfo{year}{2010}.
%Type = Book
\bibitem[{Stephani(1989)}]{stephani}
\bibinfo{author}{H. Stephani},
\bibinfo{title}{{D}ifferential equations: their solution using symmetries},
\bibinfo{publisher}{Cambridge University Press}, \bibinfo{address}{New York}, \bibinfo{year}{1989}.
%Type = Article
\bibitem[{Gonz{\'a}lez-L{\'o}pez(1988)}]{artemioecu}
\bibinfo{author}{A.~Gonz{\'a}lez-L{\'o}pez},
\newblock \bibinfo{title}{Symmetry and integrability by quadratures of ordinary
  differential equations},
\newblock \bibinfo{journal}{Phys. Lett. A} \bibinfo{volume}{133}
  (\bibinfo{year}{1988}) \bibinfo{pages}{190--194}.
  \DOIprefix\doi{10.1016/0375-9601(88)91015-8}.
%Type = Article
\bibitem[{Gonz{\'a}lez-Gasc{\'o}n and
  Gonz{\'a}lez-L{\'o}pez(1988)}]{artemiosis}
\bibinfo{author}{F.~Gonz{\'a}lez-Gasc{\'o}n},
  \bibinfo{author}{A.~Gonz{\'a}lez-L{\'o}pez},
\newblock \bibinfo{title}{Newtonian systems of differential equations,
  integrable via quadratures, with trivial group of point symmetries},
\newblock \bibinfo{journal}{Phys. Lett. A} \bibinfo{volume}{129}
  (\bibinfo{year}{1988}) \bibinfo{pages}{153--156}.
  \DOIprefix\doi{10.1016/0375-9601(88)90134-X}.
%Type = Article
\bibitem[{Govinder and Leach(1997)}]{govinder97}
\bibinfo{author}{K.~S. Govinder}, \bibinfo{author}{P.~G.~L. Leach},
\newblock \bibinfo{title}{A group-theoretic approach to a class of second-order
  ordinary differential equations not possessing {L}ie point symmetries},
\newblock \bibinfo{journal}{J. Phys. A: Math. Gen.} \bibinfo{volume}{30}
  (\bibinfo{year}{1997}) \bibinfo{pages}{2055--2068}.
  \DOIprefix\doi{10.1088/0305-4470/30/6/026}.
%Type = Article
\bibitem[{Nucci(2008)}]{nucci2008}
\bibinfo{author}{M.~C. Nucci},
\newblock \bibinfo{title}{{L}ie symmetries of a {P}ainlev\'e-type equation
  without {L}ie symmetries},
\newblock \bibinfo{journal}{J. Nonlinear Math. Phys.} \bibinfo{volume}{15 (2)}
  (\bibinfo{year}{2008}) \bibinfo{pages}{201--211}.
    \DOIprefix\doi{10.2991/jnmp.2008.15.2.7}.
%%Type = Incollection
%\bibitem[{Abraham-Shrauner and Leach(1993)}]{hidden1993}
%\bibinfo{author}{B.~Abraham-Shrauner}, \bibinfo{author}{P.~G.~L. Leach},
%\newblock \bibinfo{title}{Hidden symmetries of nonlinear ordinary differential
%  equations},
%\newblock in: \bibinfo{booktitle}{Exploiting symmetry in applied and numerical
%  analysis ({F}ort {C}ollins, {CO}, 1992)}, volume~\bibinfo{volume}{29} of
%  \textit{\bibinfo{series}{Lectures in Appl. Math.}}, \bibinfo{publisher}{Amer.
%  Math. Soc.}, \bibinfo{address}{Providence, RI}, \bibinfo{year}{1993}, pp.
%  \bibinfo{pages}{1--10}.
%%Type = Article
%\bibitem[{Nucci and Leach(2000)}]{nucci2000determination}
%\bibinfo{author}{M.~C. Nucci}, \bibinfo{author}{P.~G.~L. Leach},
%\newblock \bibinfo{title}{The determination of nonlocal symmetries by the
%  technique of reduction of order},
%\newblock \bibinfo{journal}{J. Math. Anal. Appl.} \bibinfo{volume}{251}
%  (\bibinfo{year}{2000}) \bibinfo{pages}{871--884}.
%  \DOIprefix\doi{10.1006/jmaa.2000.7141}.
%%Type = Article
%\bibitem[{Govinder and Leach(1995)}]{govinder95}
%\bibinfo{author}{K.~S. Govinder}, \bibinfo{author}{P.~G.~L. Leach},
%\newblock \bibinfo{title}{On the determination of non-local symmetries},
%\newblock \bibinfo{journal}{J. Phys. A: Math. Gen.} \bibinfo{volume}{28}
%  (\bibinfo{year}{1995}) \bibinfo{pages}{5349--5359}.
%%Type = Incollection
%\bibitem[{Abraham-Shrauner and Guo(1993)}]{shrauner93}
%\bibinfo{author}{B.~Abraham-Shrauner}, \bibinfo{author}{A.~Guo},
%\newblock \bibinfo{title}{Hidden and nonlocal symmetries of nonlinear
%  differential equations},
%\newblock in: \bibinfo{booktitle}{Modern group analysis: {A}dvanced
%  {A}nalytical and {C}omputational {M}ethods in {M}athematical {P}hysics
%  ({A}cireale, 1992)}, \bibinfo{publisher}{Kluwer Acad. Publ.},
%  \bibinfo{address}{Dordrecht}, \bibinfo{year}{1993}, pp.
%  \bibinfo{pages}{1--5}.
%%Type = Article
%\bibitem[{Basarab-Horwath(1991)}]{basarab}
%\bibinfo{author}{P.~Basarab-Horwath},
%\newblock \bibinfo{title}{Integrability by quadratures for systems of
%  involutive vector fields},
%\newblock \bibinfo{journal}{Ukrainian Math. Zh.} \bibinfo{volume}{43}
%  (\bibinfo{year}{1991}) \bibinfo{pages}{1330--1337}.
%%Type = Article
%\bibitem[{Sherring and Prince(1992)}]{sherring1992geometric}
%\bibinfo{author}{J.~Sherring}, \bibinfo{author}{G.~Prince},
%\newblock \bibinfo{title}{Geometric aspects of reduction of order},
%\newblock \bibinfo{journal}{T. Am. Math. Soc.} \bibinfo{volume}{334}
%  (\bibinfo{year}{1992}) \bibinfo{pages}{433--453}.
%Type = Article
\bibitem[{Muriel and Romero(2001)}]{muriel01ima1}
\bibinfo{author}{C.~Muriel}, \bibinfo{author}{J.~L. Romero},
\newblock \bibinfo{title}{New methods of reduction for ordinary differential
  equations},
\newblock \bibinfo{journal}{IMA J. Appl. Math.} \bibinfo{volume}{66}
  (\bibinfo{year}{2001}) \bibinfo{pages}{111--125}.
  \DOIprefix\doi{10.1093/imamat/66.2.111}.
%Type = Article
\bibitem[{Muriel and Romero(2003)}]{muriel03lie}
\bibinfo{author}{C.~Muriel}, \bibinfo{author}{J.~L. Romero},
\newblock \bibinfo{title}{{$C^\infty$}-{S}ymmetries and reduction of equations
  without {L}ie point symmetries},
\newblock \bibinfo{journal}{J. {L}ie Theory} \bibinfo{volume}{13}
  (\bibinfo{year}{2003}) \bibinfo{pages}{167--188}.
%Type = Article
\bibitem[{{Gaeta}(2009)}]{gaetatwisted}
\bibinfo{author}{G.~{Gaeta}},
\newblock \bibinfo{title}{{Twisted symmetries of differential equations}},
\newblock \bibinfo{journal}{J. Nonlinear Math. Phys.} \bibinfo{volume}{16}
  (\bibinfo{year}{2009}) \bibinfo{pages}{107--136}.
  \DOIprefix\doi{10.1142/S1402925109000352}.
%Type = Article
%%\bibitem[{Ferraioli and Morando(2009{\natexlab{a}})}]{1751-8121-42-3-035210}
%%\bibinfo{author}{D.~C. Ferraioli}, \bibinfo{author}{P.~Morando},
%%\newblock \bibinfo{title}{Local and nonlocal solvable structures in the
%%  reduction of {ODE}s},
%%\newblock \bibinfo{journal}{J. Phys. A: Math. Theor.} \bibinfo{volume}{42}
%%  (\bibinfo{year}{2009}{\natexlab{a}}) \bibinfo{pages}{035210}.
%%Type = Article
%%\bibitem[{Ferraioli and
%%  Morando(2009{\natexlab{b}})}]{doi:10.1142/S1402925109000303}
%%\bibinfo{author}{D.~C. Ferraioli}, \bibinfo{author}{P.~Morando},
%%\newblock \bibinfo{title}{Applications of solvable structures to the nonlocal
%%  symmetry-reduction of {ODE}s},
%%\newblock \bibinfo{journal}{J. Nonlinear Math. Phys.} \bibinfo{volume}{16}
%%  (\bibinfo{year}{2009}{\natexlab{b}}) \bibinfo{pages}{27--42}.
%%  \DOIprefix\doi{10.1142/S1402925109000303}.
%%Type = Article
%%\bibitem[{Ferraioli(2007)}]{ferraioli2007nonlocal}
%%\bibinfo{author}{D.~C. Ferraioli},
%%\newblock \bibinfo{title}{Nonlocal aspects of $\lambda$-symmetries and {ODE}s
%%  reduction},
%%\newblock \bibinfo{journal}{J. Phys. A: Math. Theor.} \bibinfo{volume}{40}
%%  (\bibinfo{year}{2007}) \bibinfo{pages}{5479}.
%%Type = Article
%%\bibitem[{Pucci and Saccomandi(2002)}]{pucci}
%%\bibinfo{author}{E.~Pucci}, \bibinfo{author}{G.~Saccomandi},
%%\newblock \bibinfo{title}{On the reduction methods for ordinary differential
%%  equations},
%%\newblock \bibinfo{journal}{J. Phys. A: Math. Gen.} \bibinfo{volume}{35}
%%  (\bibinfo{year}{2002}) \bibinfo{pages}{6145--6155}.
%%  \DOIprefix\doi{10.1088/0305-4470/35/29/314}.
%Type = Article
%\bibitem[{Muriel and Romero(2012)}]{muriel2012SIGMA}
%\bibinfo{author}{C.~Muriel}, \bibinfo{author}{J.~L. Romero},
%\newblock \bibinfo{title}{Nonlocal symmetries, telescopic vector fields and
%  $\lambda$-symmetries of ordinary differential equations},
%\newblock \bibinfo{journal}{SIGMA. Symmetry, Integrability and Geometry:
%  Methods and Applications} \bibinfo{volume}{8} (\bibinfo{year}{2012})
%  \bibinfo{pages}{106}.
%Type = Article
\bibitem[{Ferraioli and Morando (2009)}]{1751-8121-42-3-035210}
\bibinfo{author}{D.~C. Ferraioli}, \bibinfo{author}{P.~Morando},
  \bibinfo{title}{Local and nonlocal solvable structures in the
  reduction of {ODE}s},
  \bibinfo{journal}{J. Phys. A: Math. Theor.} \bibinfo{volume}{42}
  (\bibinfo{year}{2009}{\natexlab{a}}) \bibinfo{pages}{035210}.    \DOIprefix\doi{10.1088/1751-8113/42/3/035210}.
%Type = Article
\bibitem[{Ferraioli and Morando (2009)}]{doi:10.1142/S1402925109000303}
\bibinfo{author}{D.~C. Ferraioli}, \bibinfo{author}{P.~Morando},
  \bibinfo{title}{Applications of solvable structures to the nonlocal
  symmetry-reduction of {ODE}s},
  \bibinfo{journal}{J. Nonlinear Math. Phys.} \bibinfo{volume}{16}
  (\bibinfo{year}{2009}{\natexlab{b}}) \bibinfo{pages}{27--42}.
  \DOIprefix\doi{10.1142/S1402925109000303}.
%Type = Article
\bibitem[{Gaeta and Morando (2004)}]{0305-4470-37-27-007}
\bibinfo{author}{G. Gaeta, P. Morando},
  \bibinfo{title}{On the geometry of lambda-symmetries and PDE reduction},
  \bibinfo{journal}{J. Phys. A: Math. Gener.} \bibinfo{volume}{37} (27)
  (\bibinfo{year}{2004}) \bibinfo{pages}{6955}.
    \DOIprefix\doi{10.1088/0305-4470/37/27/007}.
% % % % % % % % % % % % % % % % % % % % % % % % % % % % % % % % %    
    \bibitem[{Muriel and Romero(2009)}]{muriel2009}
\bibinfo{author}{C.~Muriel}, \bibinfo{author}{J.~L. Romero},
\newblock \bibinfo{title}{First integrals, integrating factors and
  {$\lambda$}-symmetries of second-order differential equations},
\newblock \bibinfo{journal}{J. Phys. A: Math. Theor.} \bibinfo{volume}{42}
  (\bibinfo{year}{2009}) \bibinfo{pages}{365207, 17}.
  \DOIprefix\doi{10.1088/1751-8113/42/36/365207}.
%Type = Book
\bibitem[{Whittaker and McCrae(1988)}]{whittaker1988treatise}
\bibinfo{author}{E.~Whittaker}, \bibinfo{author}{W.~McCrae}, \bibinfo{title}{A
  Treatise on the Analytical Dynamics of Particles and Rigid Bodies}, Cambridge
  Mathematical Library, \bibinfo{publisher}{Cambridge University Press},
  \bibinfo{year}{1988}.
%Type = Article
\bibitem[{Nucci(2005)}]{nucci2005jacobi}
\bibinfo{author}{M.~Nucci},
\newblock \bibinfo{title}{{J}acobi last multiplier and {L}ie symmetries: a
  novel application of an old relationship},
\newblock \bibinfo{journal}{J. Nonlinear Math. Phys.} \bibinfo{volume}{12}
  (\bibinfo{year}{2005}) \bibinfo{pages}{284--304}.
  \DOIprefix\doi{10.2991/jnmp.2005.12.2.9}.
%Type = Article
\bibitem[{Nucci and Leach(2009)}]{MR2606129}
\bibinfo{author}{M.~C. Nucci}, \bibinfo{author}{P.~G.~L. Leach},
\newblock \bibinfo{title}{An old method of {J}acobi to find {L}agrangians},
\newblock \bibinfo{journal}{J. Nonlinear Math. Phys.} \bibinfo{volume}{16}
  (\bibinfo{year}{2009}) \bibinfo{pages}{431--441}.
  \DOIprefix\doi{10.1142/S1402925109000467}.
%Type = Article
\bibitem[{Nucci and Leach(2002)}]{NucciLeachCGroup1}
\bibinfo{author}{M.~C. Nucci}, \bibinfo{author}{P.~G.~L. Leach},
\newblock \bibinfo{title}{{J}acobi's last multiplier and the complete symmetry
  group of the {E}uler-{P}oinsot system},
\newblock \bibinfo{journal}{J. Nonlinear Math. Phys.} \bibinfo{volume}{9}
  (\bibinfo{year}{2002}) \bibinfo{pages}{110--121}.
  \DOIprefix\doi{10.2991/jnmp.2002.9.s2.10}.
%Type = Article
\bibitem[{Nucci and Leach(2005)}]{NucciLeachCGroup2}
\bibinfo{author}{M.~C. Nucci}, \bibinfo{author}{P.~G.~L. Leach},
\newblock \bibinfo{title}{{J}acobi's last multiplier and the complete symmetry
  group of the {E}rmakov-{P}inney equation},
\newblock \bibinfo{journal}{J. Nonlinear Math. Phys.} \bibinfo{volume}{12}
  (\bibinfo{year}{2005}) \bibinfo{pages}{305--320}.
  \DOIprefix\doi{10.2991/jnmp.2005.12.2.10}.
%Type = Book
\bibitem[{Ince(1956)}]{ince}
\bibinfo{author}{E.~Ince}, \bibinfo{title}{Ordinary Differential Equations},
  Dover Books on Science, \bibinfo{publisher}{Dover Publications},
  \bibinfo{address}{New York}, \bibinfo{year}{1956}.
%Type = Article
\bibitem[{Guha et~al.(2013)Guha, Choudhury, and Khanra}]{guha2013lambda}
\bibinfo{author}{P.~Guha}, \bibinfo{author}{A.~Choudhury},
  \bibinfo{author}{B.~Khanra},
\newblock \bibinfo{title}{$\lambda-${S}ymmetries, isochronicity and integrating
  factors of nonlinear ordinary differential equations},
\newblock \bibinfo{journal}{J. Eng. Math.} \bibinfo{volume}{82}
  (\bibinfo{year}{2013}) \bibinfo{pages}{85--99}.
  \DOIprefix\doi{10.1007/s10665-012-9614-5}.
%Type = Inproceedings
\bibitem[{Muriel and Romero(2010)}]{muriel09wascom}
\bibinfo{author}{C.~Muriel}, \bibinfo{author}{J.~L. Romero},
\newblock \bibinfo{title}{{$\lambda$}-{S}ymmetries on the derivation of first
  integrals of ordinary differential equations},
%\newblock in: \bibinfo{booktitle}{Proceedings ``{WASCOM} 2009'' 15th
%  {C}onference on {W}aves and {S}tability in {C}ontinuous {M}edia},
  \bibinfo{publisher}{World Sci. Publ., Hackensack, NJ}, \bibinfo{year}{2010},
  pp. \bibinfo{pages}{303--308}. \DOIprefix\doi{10.1142/9789814317429-0041}.
    %Type = Article
%  \bibitem[{Basarab-Horwath(1991)}]{basarab}
%  \bibinfo{author}{P.~Basarab-Horwath},
%  \newblock \bibinfo{title}{Integrability by quadratures for systems of
%    involutive vector fields},
%  \newblock \bibinfo{journal}{Ukrainian Math. Zh.} \bibinfo{volume}{43}
%    (\bibinfo{year}{1991}) \bibinfo{pages}{1330--1337}.
%    \DOIprefix\doi{10.1007/BF01061807}.
%%%Type = Article
%%\bibitem{muriel02teo}
%%\bibinfo{author}{C.~Muriel}, \bibinfo{author}{J.~L. Romero},
%%\newblock \bibinfo{title}{Prolongations of vector fields and the
%%  Invariants-by-Derivation property},
%%\newblock \bibinfo{journal}{Theor. Math. Phys.}
%%  (\bibinfo{year}{2002}) \bibinfo{pages}{1565--1575}.
%%    \DOIprefix\doi{http://dx.doi.org/10.1023/A:1021107112966}.
%%Type = Article
%\bibitem[{Hartl and Athorne(1994)}]{hartl1994solvable}
%\bibinfo{author}{T.~Hartl}, \bibinfo{author}{C.~Athorne},
%\newblock \bibinfo{title}{Solvable structures and hidden symmetries},
%\newblock \bibinfo{journal}{J. Phys. A: Math. Gen.} \bibinfo{volume}{27}
%  (\bibinfo{year}{1994}) \bibinfo{pages}{3463}.
%  \DOIprefix\doi{10.1088/0305-4470/27/10/022}.
%%Type = Article
%\bibitem[{Barco and Prince(2001)}]{Barco2001}
%\bibinfo{author}{M.~Barco}, \bibinfo{author}{G.~Prince},
%\newblock \bibinfo{title}{Solvable symmetry structures in differential form
%  applications},
%\newblock \bibinfo{journal}{Acta Applicandae Mathematica} \bibinfo{volume}{66}
%  (\bibinfo{year}{2001}) \bibinfo{pages}{89--121}.
%  \DOIprefix\doi{10.1023/A:1010609817442}.
%%Type = Article
\bibitem[{Muriel and Romero(2008)}]{muriel08}
\bibinfo{author}{C.~Muriel}, \bibinfo{author}{J.~L. Romero},
\newblock \bibinfo{title}{Integrating factors and {$\lambda$}-symmetries},
\newblock \bibinfo{journal}{J. Nonlinear Math. Phys.} \bibinfo{volume}{15}
  (\bibinfo{year}{2008}) \bibinfo{pages}{300--309}.
  \DOIprefix\doi{10.2991/jnmp.2008.15.s3.29}.
%%Type = Book
%\bibitem[{Hydon(2000)}]{hydon2000symmetry}
%\bibinfo{author}{P.~Hydon}, \bibinfo{title}{Symmetry Methods for Differential
%  Equations: A Beginner's Guide}, Cambridge Texts in Applied Mathematics,
%  \bibinfo{publisher}{Cambridge University Press}, \bibinfo{year}{2000}.
%Type = Book
\bibitem[{Bluman and Anco(2002)}]{blumanlibro}
\bibinfo{author}{G.~W. Bluman}, \bibinfo{author}{S.~C. Anco},
  \bibinfo{title}{Symmetry and Integration Methods for Differential Equations},
  \bibinfo{publisher}{Springer-Verlag}, \bibinfo{address}{New York},
  \bibinfo{year}{2002}.
%Type = Article
\bibitem[{Anco and Bluman(1998)}]{blumanancoarticulo}
\bibinfo{author}{S.~C. Anco}, \bibinfo{author}{G.~Bluman},
\newblock \bibinfo{title}{Integrating factors and first integrals for ordinary
  differential equations},
\newblock \bibinfo{journal}{Euro. J. Appl. Math.} \bibinfo{volume}{9}
  (\bibinfo{year}{1998}) \bibinfo{pages}{245--259}.
%Type = Article
\bibitem[{Muriel and Romero(2014)}]{muriel2014Jacobi}
\bibinfo{author}{C.~Muriel}, \bibinfo{author}{J.~Romero},
\newblock \bibinfo{title}{The $\lambda$-symmetry reduction method and {J}acobi
  last multipliers},
\newblock \bibinfo{journal}{Commun. Nonlinear Sci. Numer. Simul.}
  \bibinfo{volume}{19} (\bibinfo{year}{2014}) \bibinfo{pages}{807--820}.
  \DOIprefix\doi{10.1016/j.cnsns.2013.07.027}.
  
  
  \bibitem{mohanasubha2014interplay}
  \bibinfo{author}{R. Mohanasubha, V. K. Chandrasekar, M. Senthilvelan, M, Lakshmanan},
    \bibinfo{title}{Interplay of symmetries, null forms, Darboux polynomials, integrating factors and Jacobi multipliers in integrable second-order differential equations},
    \bibinfo{journal}{ P. Roy. Soc. A}%Proceedings of the Royal Society of London A: Mathematical, Physical and Engineering Sciences}
    \bibinfo{volume}{470} (\bibinfo{year}{2014}) \bibinfo{pages}{20130656}.
    \DOIprefix\doi{10.1098/rspa.2013.0656}.
    
  \bibitem{mohanasubha2015certain}
  \bibinfo{author}{R. Mohanasubha, V. K. Chandrasekar, M. Senthilvelan, M, Lakshmanan},
    \bibinfo{title}{On certain analytical methods in finding integrable systems and their interconnections},
    \bibinfo{journal}{arXiv:1502.03914}  (\bibinfo{year}{2015}).
    
  
    \bibitem{bhuvaneswari2012application}
    \bibinfo{author}{A. Bhuvaneswari, R. A. Kraenkel, M. Senthilvelan},
      \bibinfo{title}{Application of the $\lambda$-symmetries approach and time independent integral of the modified Emden equation},
      \bibinfo{journal}{Nonlinear Analysis: Real World Applications}
      \bibinfo{volume}{3} (\bibinfo{year}{2012}) \bibinfo{pages}{1102--1114}.
      \DOIprefix\doi{10.1016/j.nonrwa.2011.08.030}.
      
  

\end{thebibliography}
\def\cprime{$'$}

\end{document}